\documentclass[11pt]{article}
\pdfoutput=1

\usepackage{geometry}
\usepackage{amsmath}
\usepackage{amssymb}
\usepackage{amsthm}
\usepackage{mathtools}
\usepackage{mathrsfs}  

\usepackage{graphicx}
\usepackage{tikz}
\usepackage{tikz-cd}   
\usetikzlibrary{arrows.meta, positioning, decorations.markings, calc}
\usepackage[x11names,svgnames]{xcolor}

\usepackage{microtype}
\usepackage[T1]{fontenc}
\usepackage{lmodern}

\usepackage[numbers,sort&compress]{natbib}

\usepackage{hyperref}
\hypersetup{colorlinks=true,
            linkcolor=blue!70!black,
            citecolor=blue!70!black,
            urlcolor=blue!70!black}
\makeatletter
\g@addto@macro{\UrlBreaks}{\UrlOrds}
\makeatother

\usepackage{doi}
\usepackage{aliascnt}
\usepackage[capitalise, nameinlink]{cleveref}

\usepackage{enumitem}
\usepackage{booktabs}  
\usepackage{appendix}  
\usepackage{caption}

\usepackage{amsfonts}
\usepackage{mathdots}

\usepackage{subcaption}   
\usepackage{float}         
\usepackage{wrapfig}       
\usepackage{adjustbox}     
\usepackage{stackengine}   
\usepackage{scalerel}      

\usepackage{comment}
\usepackage[colorinlistoftodos]{todonotes}

\usepackage{nicematrix}

\newtheoremstyle{ghriststyle}
    {12pt}                    
    {12pt}                    
    {\itshape}                
    {}                        
    {\scshape}                
    {.}                       
    {.5em}                    
    {}                        

\newtheoremstyle{ghristdefstyle}
    {12pt}                    
    {12pt}                    
    {\normalfont}             
    {}                        
    {\scshape}                
    {.}                       
    {.5em}                    
    {}                        

\newtheoremstyle{ghristremarkstyle}
    {8pt}                     
    {8pt}                     
    {\normalfont}             
    {}                        
    {\itshape}                
    {.}                       
    {.5em}                    
    {}                        

\theoremstyle{ghriststyle}
\newtheorem{theorem}{Theorem}[section]
\newaliascnt{lemma}{theorem}
\newtheorem{lemma}[lemma]{Lemma}
\aliascntresetthe{lemma}
\crefname{lemma}{Lemma}{lemmas}
\newaliascnt{proposition}{theorem}
\newtheorem{proposition}[proposition]{Proposition}
\aliascntresetthe{proposition}
\crefname{proposition}{Proposition}{propositions}
\newaliascnt{corollary}{theorem}
\newtheorem{corollary}[corollary]{Corollary}
\aliascntresetthe{corollary}
\crefname{corollary}{Corollary}{corollaries}
\newtheorem{conjecture}[theorem]{Conjecture}

\theoremstyle{ghristdefstyle}
\newtheorem{definition}[theorem]{Definition}
\newtheorem{example}[theorem]{Example}

\theoremstyle{ghristremarkstyle}
\newaliascnt{remark}{theorem}
\newtheorem{remark}[remark]{Remark}
\aliascntresetthe{remark}
\crefname{remark}{Remark}{remarks}
\newaliascnt{note}{theorem}

\aliascntresetthe{note}
\crefname{note}{Note}{notes}

\stackMath

\definecolor{oreomint}{HTML}{4CAF50}  
\definecolor{oreogold}{HTML}{D4A843}  

\definecolor{bc_blue}{HTML}{2196F3}
\definecolor{bc_orange}{HTML}{F58B28}
\definecolor{bc_green}{HTML}{4CAF50}
\definecolor{bc_magenta}{HTML}{E91E63}
\definecolor{bc_purple}{HTML}{9C27B0}
\definecolor{bc_red}{HTML}{E8352E}
\definecolor{bc_yellow}{HTML}{F5D623}

\newcommand{\birthdaycake}{%
  \textbf{%
    \textcolor{bc_blue}{B}%
    \textcolor{bc_orange}{i}%
    \textcolor{bc_green}{r}%
    \textcolor{bc_magenta}{t}%
    \textcolor{bc_purple}{h}%
    \textcolor{bc_orange}{d}%
    \textcolor{bc_red}{a}%
    \textcolor{bc_yellow}{y }%
    \textcolor{bc_green}{C}%
    \textcolor{bc_orange}{a}%
    \textcolor{bc_magenta}{k}%
    \textcolor{bc_purple}{e}%
  } Oreo}

\newcommand{\framedimg}[2][]{%
  \adjustbox{frame=0.4pt, cfbox=black!40 0.4pt 2pt}{%
    \includegraphics[#1]{#2}%
  }%
}

\makeatletter

\newcommand{\thanksblock}[1]{\gdef\@thanksblock{#1}}
\gdef\@thanksblock{} 

\renewcommand{\maketitle}{%
  \begin{center}%
    {\LARGE\bfseries \@title \par}%
    \vskip 1em%
    {\large \lineskip .5em%
      \begin{tabular}[t]{c}%
        \@author
      \end{tabular}\par}%
    \ifx\@thanksblock\@empty\else
      \vskip .75em%
      {\small \@thanksblock\par}%
    \fi
  \end{center}%
  \vskip 1.5em%
}

\makeatother

\date{} 

\usepackage{titlesec}

\titleformat{\section}
    {\normalfont\large\bfseries}
    {\thesection.}
    {0.5em}
    {}

\titleformat{\subsection}
    {\normalfont\normalsize\bfseries}
    {\thesubsection.}
    {0.5em}
    {}

\titleformat{\subsubsection}
    {\normalfont\normalsize\itshape}
    {\thesubsubsection.}
    {0.5em}
    {}

\renewenvironment{abstract}{%
    \begin{center}
        \textsc{Abstract}
    \end{center}
    \begin{quotation}
    \noindent
    \small
}{%
    \end{quotation}
    \vspace{1em}
    \normalsize
}

\title{The $\infty$-Oreo\textsuperscript{\textregistered}}

\author{Vicente Bosca}

\thanksblock{%
Department of Mathematics, University of Pennsylvania\\
\texttt{vicenteg@sas.upenn.edu}
}

\begin{document}
\maketitle

\renewcommand{\thefootnote}{}
\footnotetext{\scriptsize
Oreo\textsuperscript{\textregistered},
Double Stuf\textsuperscript{\textregistered},
Mega Stuf\textsuperscript{\textregistered},
Oreo Loaded\textsuperscript{\textregistered}, Mint Oreo\textsuperscript{\textregistered}, Birthday Cake Oreo\textsuperscript{\textregistered}, and Golden Oreo\textsuperscript{\textregistered} are trademarks of
Mondel\={e}z International, Inc.\
M\&M's\textsuperscript{\textregistered} and
Crunchy Cookie M\&M's\textsuperscript{\textregistered} are trademarks of
Mars, Inc. \ Ben \& Jerry's\textsuperscript{\textregistered} is a registered trademark of Unilever. 
This work is not affiliated with, endorsed by, or sponsored by
any of these companies.}
\renewcommand{\thefootnote}{\arabic{footnote}}


\begin{abstract}
What happens when a food product contains a version of itself?  The
Oreo Loaded---a cookie whose filling contains real Oreo cookie
crumbs---can be viewed as the result of mixing a Mega Stuf Oreo into a Mega Stuf
Oreo.  Iterating this process yields a sequence of increasingly
self-referential cookies; taking the limit gives the $\infty$-Oreo.
We model the iteration as an affine recurrence on the creme fraction
of the filling, prove convergence, and compute the limit exactly: the
stuf of the $\infty$-Oreo is approximately $95.8\%$~creme and
$4.2\%$~wafer.  We then extend the framework to pairs of foods that
reference each other, deriving a coupled recursion whose fixed point
defines a \emph{bi-$\infty$ food}, and illustrate the construction
with M\&M Cookies and Crunchy Cookie M\&M's.  Finally, we classify
$\infty$-foods by the number of foods in the recursion and introduce
\emph{homological foods}, whose recursive structure is governed by
cycles in a directed graph of commercially available products.  We
close with a conjecture.  All products used in this paper can be
purchased at a supermarket.
\end{abstract}



\section{Introduction}\label{sec:introduction}

The Oreo cookie, manufactured by Mondelēz International since 1912, is
among the most commercially successful food products in history.  It is
also the subject of a small but dedicated body of scientific
work~\cite{anderson2013, teenvogue, james_lamers, mit_oreo}.  We
contribute to this literature by addressing the following question: can
Oreo produce the best possible Oreo-flavored cookie?
We show that the answer is yes, and that it can be computed exactly.  By
modeling the iterative process of making an Oreo-flavored Oreo as a
recurrence relation and taking the limit, we obtain what we call the
\textbf{$\infty$-Oreo}: the cookie whose flavor is, in a precise sense,
as Oreo as a cookie can be.

Specific contributions of this work include the following.

\begin{enumerate}[label=\arabic*.]
    \item In \S\ref{sec:preliminaries}, we introduce the anatomy of the
    Oreo cookie and review previous empirical work on its composition.

    \item In \S\ref{sec:infinity-oreo}, we set up the Oreo recursion,
    solve it, and compute the composition of the $\infty$-Oreo.

    \item In \S\ref{sec:bi-infinity}, we extend the framework to systems
    of two foods that reference each other, which we call \emph{bi-$\infty$
    foods}, and illustrate the construction with M\&M and Cookies.

    \item In \S\ref{sec:classification}, we classify $\infty$-foods by the
    number of foods involved in the recursion and postulate the existence of
    \emph{homological foods}: $\infty$-foods whose recursive structure is
    governed by the topology of an underlying directed graph.

    \item In \S\ref{sec:discussion}, we discuss open problems and future
    directions.
\end{enumerate}

\subsection{Motivations}\label{sec:motivations}

Readers who need no motivation for recursive snacking may skip to
\S\ref{sec:preliminaries}.

\medskip

A successful food company must periodically introduce new flavors.  In the
early stages this is straightforward: one selects a well-known flavor and
incorporates it into the product.  Over time, however, the space of
obvious candidates is exhausted, and the process of flavor innovation
becomes increasingly fun and abstract.

This progression can be understood through the \emph{Yankee Candle Levels
of Abstraction}, a classification framework introduced on
Twitter~\cite{yankeecandle} for the increasingly detached relationship
between a scented candle and the real-world object it claims to evoke.
The same hierarchy applies to cookie flavors.\footnote{The Yankee Candle
framework is closely related to Baudrillard's theory of
simulacra~\cite{baudrillard}, known as a meme~\cite{reddit_baudrillard} through the example that begins with a
fish and ends with Swedish Fish--flavored Oreos.}

\begin{wrapfigure}{R}{0.5\textwidth}
  \centering
  \vspace{-10pt}
  \framedimg[width=\linewidth]{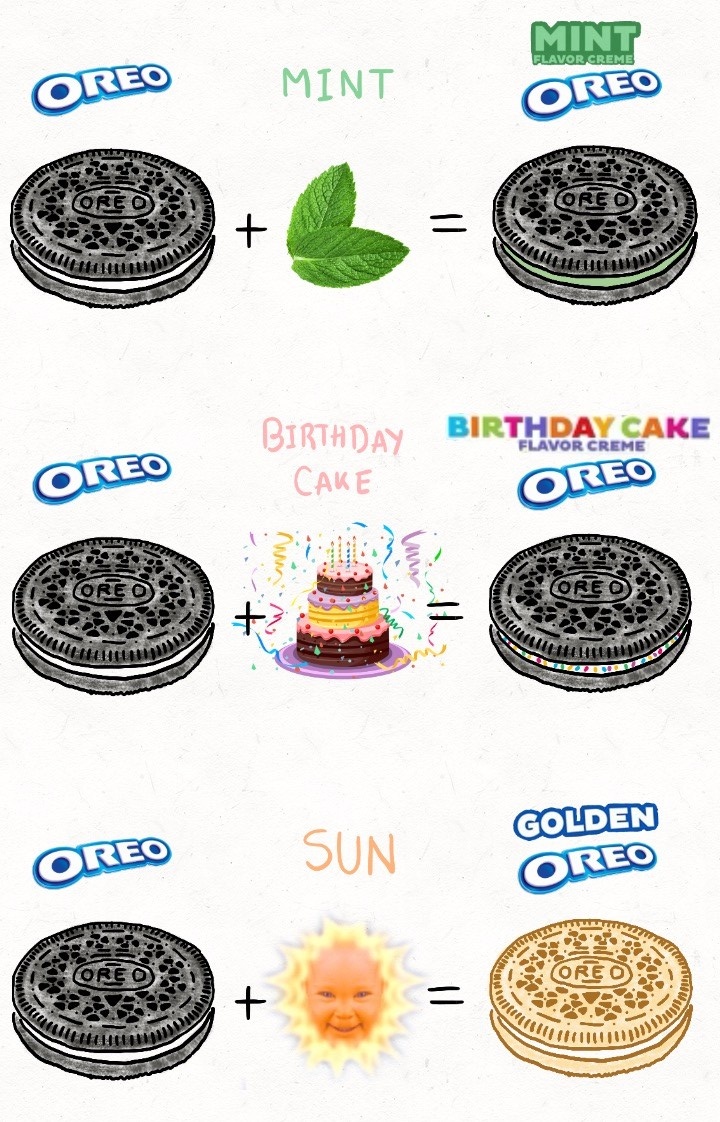}
  \caption{Flavor innovation at three levels of abstraction.}\label{fig:oreo-flavors}
  \vspace{-8pt}
\end{wrapfigure}

At \textbf{Level~1}, the scent represents a physical object whose aroma is
one of its distinctive properties: a candle that smells like black
cherries.  This is the straightforward phase.

An Oreo equivalent is the
\textcolor{oreomint}{\textbf{Mint Creme Oreo}}: take a flavor from the real world and mix it
into the filling.

At \textbf{Level~2}, the scent represents a physical object in a specific
state, where the aroma evokes the state rather than the object itself: in a
``clean cotton'' candle, it is the
\emph{cleanness} of the cotton that constitutes the distinctive aroma, not the cotton itself.
An Oreo equivalent is the \birthdaycake.  Note that the
flavor of the cake is irrelevant; what matters is that we are at a
\emph{birthday}.

At \textbf{Level~7}, the scent represents an intrinsic property that itself
has no aroma.  Yankee Candle achieved this with a product labeled ``All Is
Bright.''  An Oreo equivalent is the 
\textcolor{oreogold}{\textbf{Golden Oreo}}, a product best
described as what happens when we mix an Oreo with the Sun.

At \textbf{Level~$\boldsymbol{\Omega}$}, the scent represents total
abstraction: the absence of any referent.  Yankee Candle's entry at this
level is a candle called ``Sweet Nothings.''  This is checkmate for Oreo.  They cannot sell a nothing-flavored cookie; it would just be a cracker.

Rather than continuing to climb the ladder
of abstraction, Oreo's resolution is to abandon the hierarchy entirely and pose a different
question: what is the best possible flavor for an Oreo?  From Oreo's
perspective, the answer is clear---Oreo itself.  The goal, then, is to
produce a cookie that tastes more like Oreo than an ordinary Oreo does.
What this means is not at all obvious.  To make it precise, we need some
preliminaries.


\section{Preliminaries---The Anatomy of an Oreo}\label{sec:preliminaries}

In this section we establish notation and review the empirical data that
will serve as inputs to the recursion in \S\ref{sec:infinity-oreo}.  

\subsection{Components and notation}\label{sec:components}

The Oreo is a sandwich cookie consisting of two components.

\begin{definition}[Wafer and Stuf]\label{def:wafer-stuf}
The \textbf{wafer} is the dark biscuit component of an Oreo cookie.  The
\textbf{stuf} is the white filling.\footnote{The official Mondelēz
spelling is ``Stuf,'' not ``stuff.''  The filling is marketed as
\emph{creme} (not \emph{cream}), since it contains no dairy, making the
Oreo---perhaps surprisingly---a vegan cookie.}
\end{definition}

Every Oreo can be described by how much of its mass comes from each component.

\begin{definition}[Stuf fraction]\label{def:stuf-fraction}
Let $s$ denote the mass fraction of an Oreo cookie attributable to the
stuf. $1 - s$ corresponds to the part attributable to the wafer.
\end{definition}

The composition of any Oreo cookie is fully determined by a single
number $s \in [0,1]$.  At one extreme, $s = 0$ is a cookie with no
filling---just two wafers pressed together, a product that would
delight the small but passionate community of those who maintain that the
wafer is the only part worth eating.  At the other, $s = 1$ is a
free-standing disk of stuf with no wafer at all, the platonic ideal for
stuf purists.

\begin{figure}[ht]
  \centering
  \framedimg[width=0.85\textwidth]{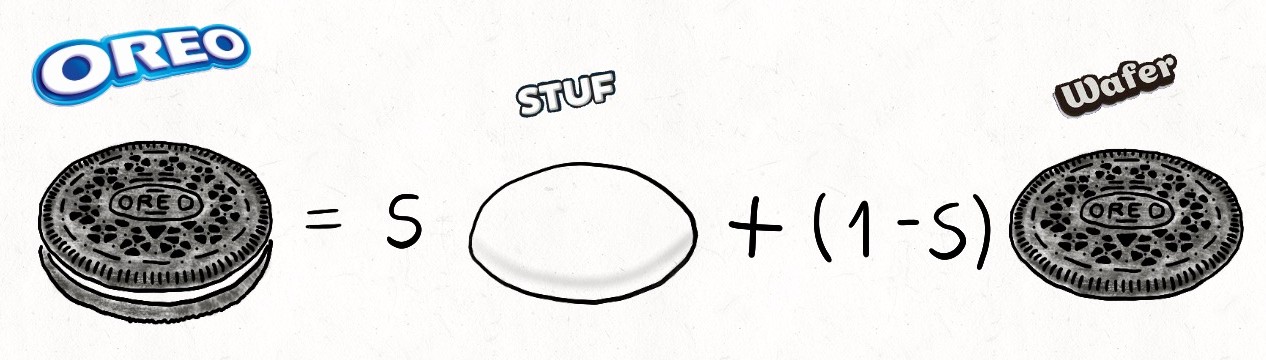}
  \caption{Original Oreo broken down into its stuf fraction~$s$ and wafer fraction~$1-s$.}\label{fig:oreo-decomposition}
\end{figure}

The composition of the original Oreo has been measured repeatedly.
According to data published by \emph{Teen
Vogue}~\cite{teenvogue}---in an article titled ``10 Things You Definitely Didn't
Know About Oreos''---the original
Oreo has
\begin{equation}\label{eq:original-oreo}
    s = 0.29.
\end{equation}
That is, the original Oreo is roughly 29\% stuf and 71\% wafer by mass.
We take these values as given throughout the paper.

\subsection{The Oreo product line}\label{sec:product-line}

To produce an Oreo-flavored Oreo, one must enhance the cookie with more of one of its two components: add more stuf,
or add more wafer.  Oreo started with the stuf---the preferred
component among the larger of the two Oreo factions.

\subsubsection*{More stuf.}

\begin{wrapfigure}{R}{0.5\textwidth}
  \centering
  \vspace{-10pt}
  \framedimg[width=\linewidth]{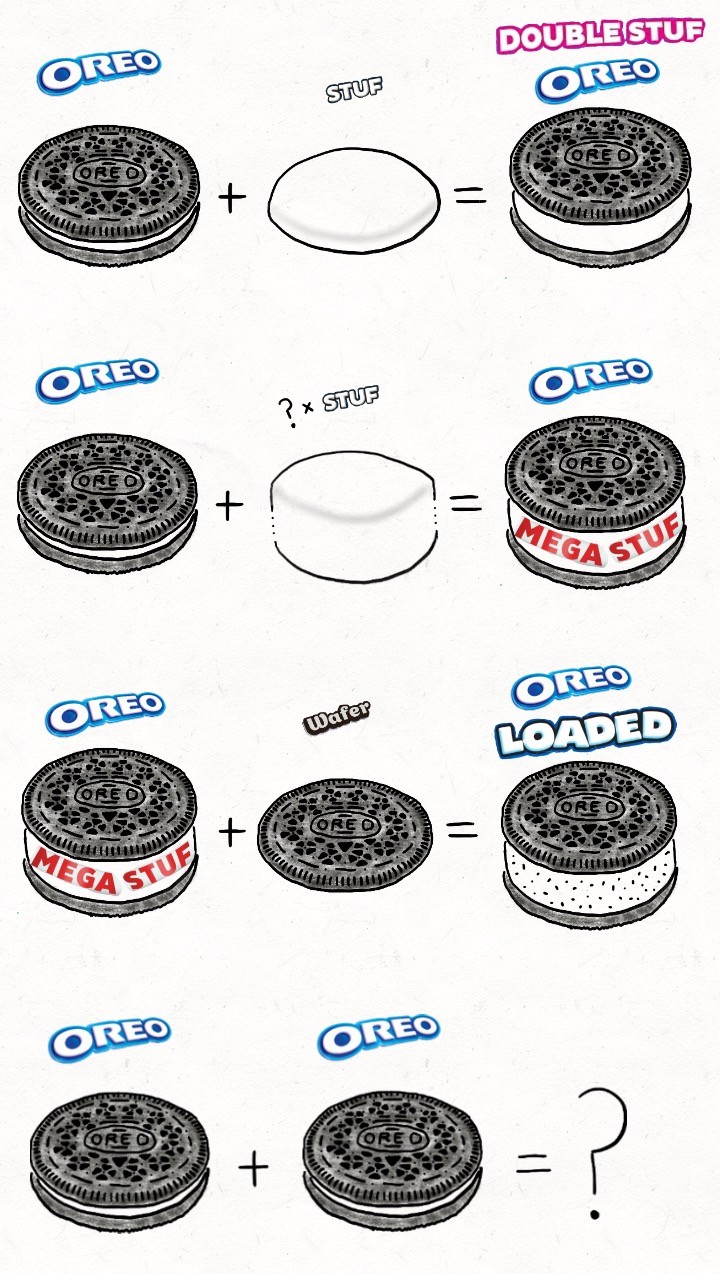}
  \caption{The Oreo product line as a sequence of enhancements.}\label{fig:oreo-oreo}
  \vspace{-6pt}
\end{wrapfigure}

The simplest approach is to increase the amount of filling.  The
\textbf{Double Stuf Oreo}, introduced in 1974, does exactly this: the
wafers are unchanged, but the stuf is ostensibly doubled.

Whether the stuf is \emph{actually} doubled was the subject of a 2013 experiment by Dan Anderson, a high school mathematics teacher in Queensbury, New York, and his students~\cite{anderson2013}. The result: a Double Stuf Oreo contained only 1.86 times the stuf of an original.

The experiment attracted the attention of ABC News~\cite{abcnews}, whose coverage opened with the words: ``If you ever thought high school math wasn't useful in real life\dots''  Mondelēz responded by maintaining that the recipe does call
for double the filling~\cite{abcnews}, and a subsequent study at Clemson
University with a larger sample supported this
claim~\cite{clemson2014}.  

Anderson also measured the \textbf{Mega Stuf Oreo}, a product released
around the same time with a visibly larger but unspecified amount of
filling.  The Mega Stuf turned out to contain $2.68$ times the stuf of
an original~\cite{anderson2013}---establishing, at last, a concrete
meaning for the prefix \emph{mega}.

\subsubsection*{More wafer.}

One could instead enhance the cookie with additional wafer, for instance
by mixing cookie crumbs into the filling.  With the original Oreo as a
base, however, this modification is nearly imperceptible.  The original is
already 71\% wafer by mass; adding a small amount of crumbs to an already
wafer-dominated cookie produces a flavor difference that is, at best,
negligible.

The situation changes if one starts from a stuf-enhanced base.  A Mega
Stuf Oreo has a substantially larger filling, enough that mixing in
cookie crumbs produces a recognizable difference.  Oreo exploited
precisely this idea with the \textbf{Oreo Loaded}: a cookie with Mega
Stuf--level creme in which real Oreo cookie crumbs have been mixed into
the filling.

Note, however, that every product described so far enhances only
\emph{one} component of the Oreo.  The Double Stuf and Mega Stuf add
more creme; the Oreo Loaded adds more wafer.
None of them puts an entire Oreo inside another Oreo. 

This is where a mathematician's perspective becomes useful. The Oreo Loaded's filling can be viewed as creme from a base Mega Stuf Oreo mixed with a whole second Mega Stuf Oreo---crumbs, creme, and all. The Oreo Loaded is, in this sense, a Mega Stuf Oreo mixed with a Mega Stuf Oreo: the first Oreo-flavored Oreo.

But this is only the \emph{first}.  The Oreo Loaded contains another Oreo inside
it, but what if it contained another Oreo Loaded?  A truly Oreo-flavored Oreo would contain an Oreo-flavored
Oreo, which would contain an Oreo-flavored Oreo, and so on.  To reach
this ideal, we must iterate the process.  This is the subject of the next
section.


\section{The $\infty$-Oreo}\label{sec:infinity-oreo}

\begin{wrapfigure}{R}{0.5\textwidth}
  \centering
  \vspace{-10pt}
  \framedimg[width=\linewidth]{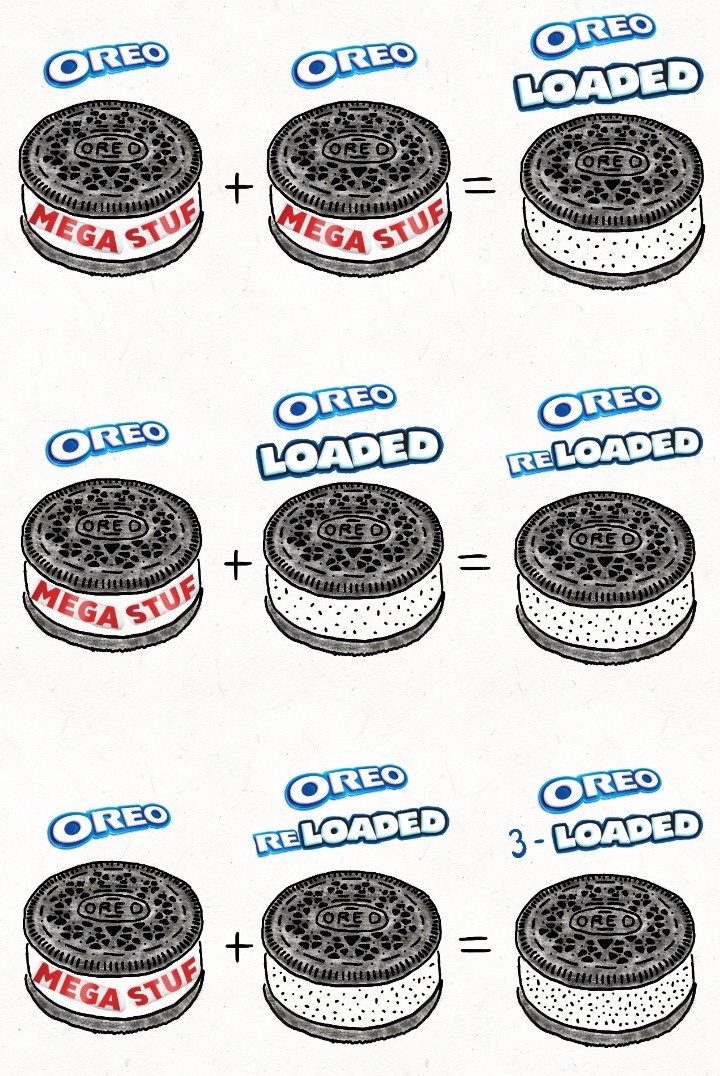}
  \caption{The first three iterations.}\label{fig:oreo-iteration}
  \vspace{-8pt}
\end{wrapfigure}

We saw in \S\ref{sec:product-line} that the Oreo Loaded can be regarded
as the result of mixing a Mega Stuf Oreo into a Mega Stuf Oreo.  In this
section, we iterate the process and compute the composition of the limit.

\subsection{The recursion}\label{sec:recursion}

The Oreo Loaded is the first Oreo-flavored Oreo, but its interior Oreo
is just an ordinary Mega Stuf.  A more thoroughly Oreo-flavored Oreo
would contain, inside its filling, not just any Oreo, but an
Oreo-flavored Oreo.  We can arrange this by repeating the construction.

\subsubsection*{The Oreo Reloaded.}

Take a Mega Stuf Oreo as the base cookie, but this time, instead of
mixing in a plain Mega Stuf, mix in an Oreo Loaded.  The result is a
cookie whose filling contains creme and crumbs from a cookie that itself
contains creme and Oreo crumbs.  We call this the \textbf{Oreo Reloaded}:
an Oreo-flavored Oreo-flavored Oreo.

\subsubsection*{The Oreo 3-Loaded.}

There is no reason to stop.  Take another Mega Stuf base and mix in the
Oreo Reloaded.  The result is the \textbf{Oreo 3-Loaded}: a cookie whose
filling is made from creme and crumbs from a Reloaded, which was itself
made from crumbs of a Loaded, which was made from crumbs of a Mega Stuf.

\subsubsection*{The general case.}

The pattern is now clear.

\begin{definition}[The Oreo $n$-Loaded]\label{def:n-loaded}
For $n \geq 1$, the \textbf{Oreo $n$-Loaded} is the cookie obtained by
taking a Mega Stuf Oreo as the base and mixing an Oreo $(n{-}1)$-Loaded
into its filling.
\end{definition}
 
Note that this construction naturally identifies the Mega Stuf Oreo with
the Oreo $0$-Loaded, the Oreo Loaded with the Oreo $1$-Loaded, and the
Oreo Reloaded with the Oreo $2$-Loaded.

As $n$ increases, the
cookie mixed into the filling is itself the product of more and more
iterations, so the filling becomes an increasingly deep nesting of Oreo
within Oreo within Oreo.

This is a recurrence relation, and recurrence relations have limits.

\begin{definition}[The $\infty$-Oreo]\label{def:infinity-oreo}
The \textbf{$\infty$-Oreo} is the limit of the sequence of Oreo
$n$-Loaded cookies as $n \to \infty$:
\[
    \text{$\infty$-Oreo} \;\coloneqq\; \lim_{n \to \infty}\, \text{Oreo $n$-Loaded}.
\]
\end{definition}

The $\infty$-Oreo is, by construction, a cookie whose filling contains an
$\infty$-Oreo.  It is Oreo-flavored all the way down.

\begin{figure}[ht]
  \centering
  \framedimg[width=0.75\textwidth]{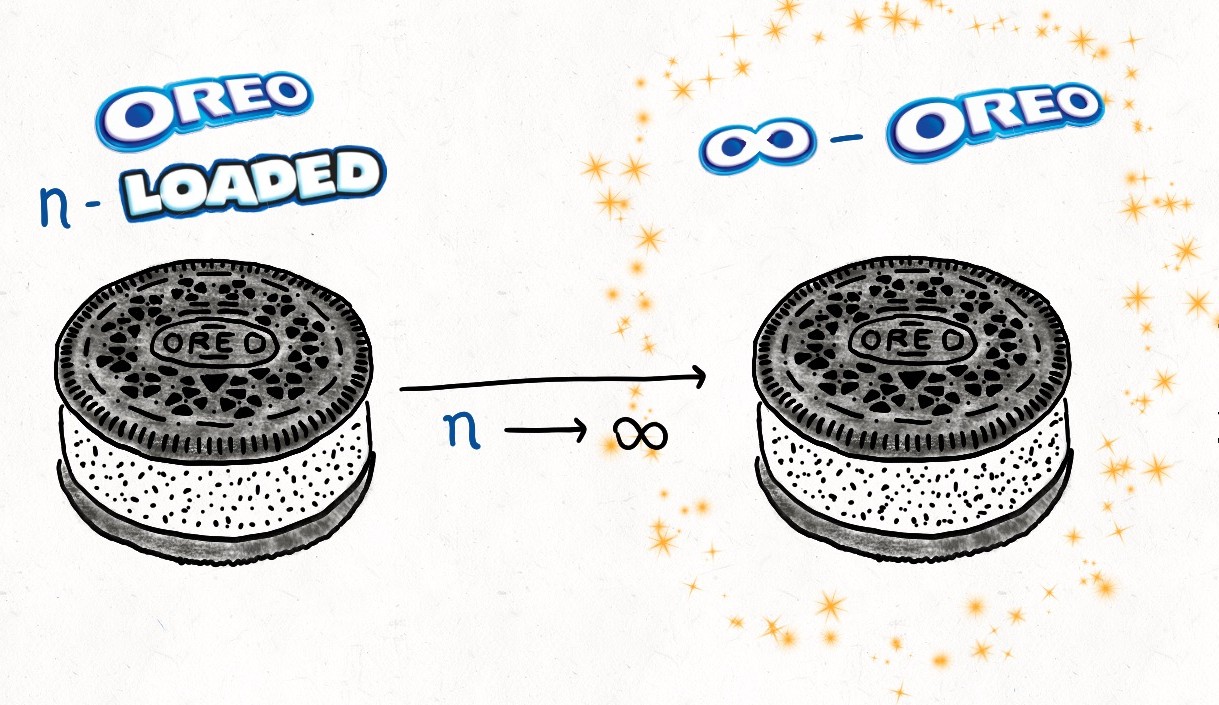}
  \caption{The $\infty$-Oreo: the limit of the Oreo $n$-Loaded as
  $n \to \infty$.}\label{fig:infinity-oreo}
\end{figure}

\medskip

To make this precise, we need to define what ``mixing'' means in terms of
the composition of the resulting cookie.

\subsection{The composition equations}\label{sec:composition-equations}

To compute the composition of the $\infty$-Oreo, we need to track how
the filling changes across iterations.  Every cookie in the sequence has
a stuf made of some mixture of creme and wafer crumbs; as $n$~increases,
the proportions shift.  Our goal is to determine the \emph{creme fraction}
of the stuf at each step, and take the limit.

This requires two measured quantities and one equation.  We introduce the
measurements first.

\subsubsection*{The Mega Stuf composition.}

Write $S_n$ for the filling inside the Oreo $n$-Loaded and $O_n$ for the
whole cookie.  Lamers~\cite{james_lamers} weighed the separated
components of five Oreo varieties on a kitchen scale.  Write $m_s$
for the mass of the Mega Stuf filling and $m_w$ for the combined mass
of the two wafers:
\begin{equation}\label{eq:mS-mW}
    m_s = 10\;\text{g}, \qquad m_w = 8\;\text{g}.
\end{equation}

\begin{definition}[Stuf fraction]\label{def:mega-stuf-fraction}
For each $n \geq 0$, let $m_n$ denote the mass fraction of the Oreo
$n$-Loaded attributable to its stuf.  The remaining fraction $1 - m_n$
is attributable to the wafer.
\end{definition}

For the Mega Stuf Oreo---the Oreo $0$-Loaded---the stuf is pure
creme with no crumbs, so
\begin{equation}\label{eq:m-value}
    m_0 \;=\; \frac{m_s}{m_s + m_w}
       \;=\; \frac{10}{18}
       \;\approx\; 0.56.
\end{equation}
That is, a Mega Stuf Oreo is approximately 56\% filling and 44\%
wafer by mass.

\begin{remark}\label{rem:anderson-m}
Anderson's indirect calculation from~\S\ref{sec:product-line} gives
a different value: using the Mega Stuf multiplier~$2.68$ and the
original stuf fraction~$s = 0.29$, one obtains
$2.68\,s/(2.68\,s + 1 - s) \approx 0.52$.  The discrepancy
with~\eqref{eq:m-value} reflects measurement uncertainty across
different sources and methods.  We use the Lamers
value~\eqref{eq:m-value} throughout, as it involves fewer
intermediate steps.
\end{remark}

As $n$ increases, the crumbs in the filling add mass (see below),
so the stuf fraction~$m_n$ is not constant across the
sequence. We return to this
point after establishing the recursion.

\subsubsection*{The Stuf Loaded.}

The stuf of the Oreo Loaded consists of creme with small, visible wafer
crumbs distributed through it.\footnote{This required patience, a kitchen scale, and the assistance of a very interested dog. Lava's contribution to the data collection was essential.}

\begin{definition}[Loaded creme fraction]\label{def:loaded-creme-fraction}
Let $\ell$ denote the mass fraction of the stuf of the Oreo Loaded that
is creme. $1 - \ell$ corresponds to the part attributable to the wafer.
\end{definition}

Computing~$\ell$ requires two ingredients: the baseline creme
mass~$m_s$ from a plain Mega Stuf~\eqref{eq:mS-mW}, and the total
stuf mass of the Oreo Loaded.

For the Oreo Loaded, we extracted the filling from each of the
$21$~cookies in a single package and weighed the combined stuf.  The
total was $219$\,g, giving a mean stuf mass per cookie of
$219/21 = 10.4\overline{285714}$\,g, approximately $0.43$\,g heavier
than a plain Mega Stuf.  This excess is attributed to the wafer crumbs.

\begin{figure}[ht]
  \centering
  \framedimg[width=0.85\textwidth]{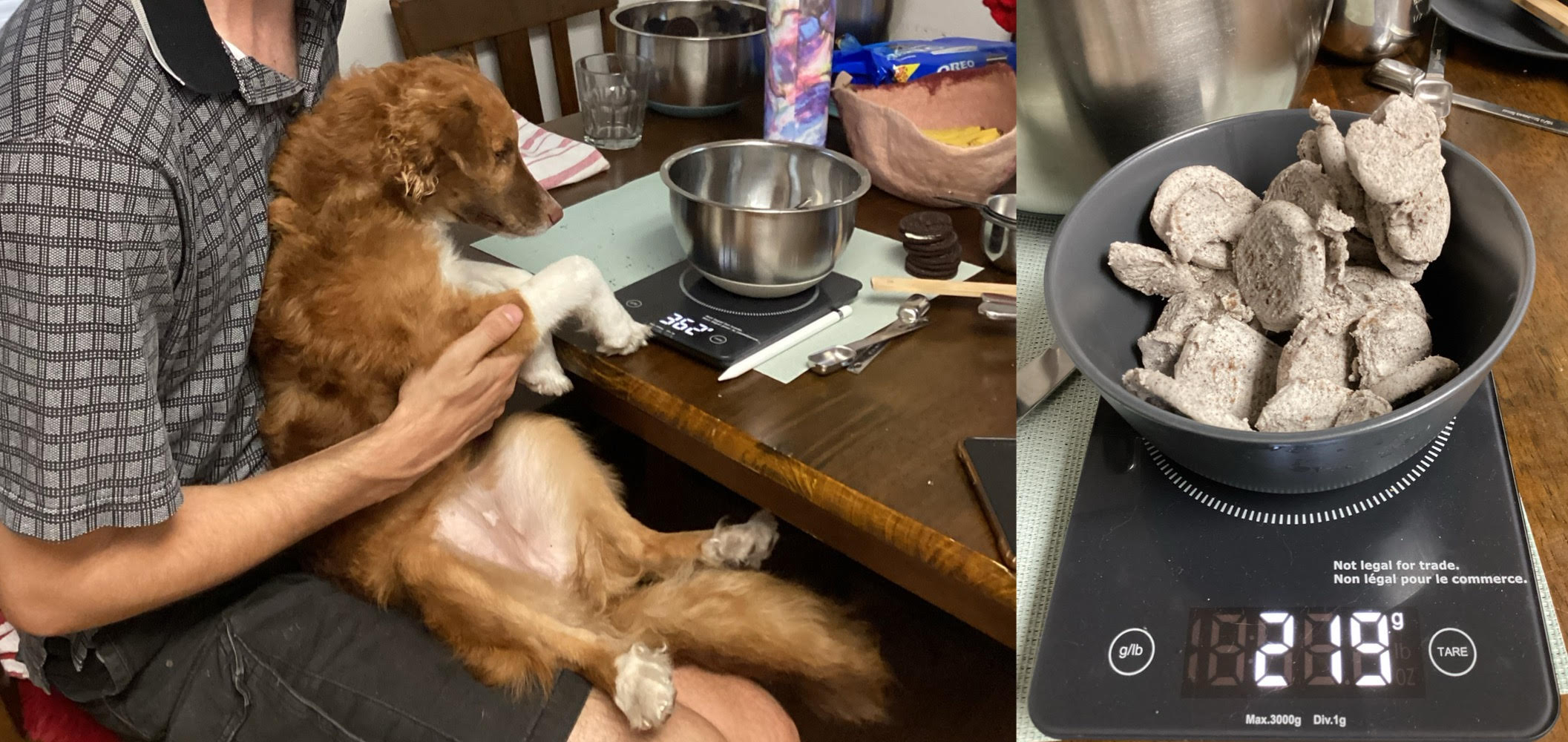}
  \caption{The measurement process.  Lava, providing quality
  control (left), and stuf from the Oreo Loaded on the kitchen
  scale (right).}\label{fig:methods}
\end{figure}

It remains to determine how the crumbs interact with the creme.  One
might expect the crumbs to displace an equal volume of creme, but
a density comparison rules this out: Oreo cookie crumbs have a bulk
density of approximately $0.46$\,g/cm$^3$~\cite{aquacalc}, while Oreo
creme has a density of approximately
$1.0$\,g/cm$^3$.\footnote{Estimated from Lamers' measurements of the
original Oreo stuf: $4$\,g, $4$\,mm thick, roughly $35$\,mm in
diameter~\cite{james_lamers}.}  Since the crumbs are less dense than
the creme they would replace, displacement would make the stuf
\emph{lighter}, not heavier.  The data show the opposite.

We instead assume that the creme, being a whipped filling with a porous
structure, accommodates the crumbs in its air pockets.  Under this model,
the crumbs add mass without displacing any creme, so the creme mass in
each Oreo Loaded stuf is the same $10$\,g as in a plain Mega Stuf.  The
loaded creme fraction is therefore
\begin{equation}\label{eq:ell-derivation}
    \ell \;=\; \frac{10}{219/21} \;=\; \frac{210}{219}
    \;\approx\; 0.96,
\end{equation}
That is, approximately 96\% of the filling mass is creme and 4\% is
wafer crumbs.  The crumbs are pure wafer: any creme from the crushed
Mega Stuf dissolves into the base creme upon mixing.

We regard this as an approximation;
a more precise determination would require the manufacturing
specifications from Mondelēz International.  

\subsubsection*{The recursion.}

\begin{wrapfigure}{R}{0.5\textwidth}
  \centering
  \vspace{-10pt}
  \framedimg[width=\linewidth]{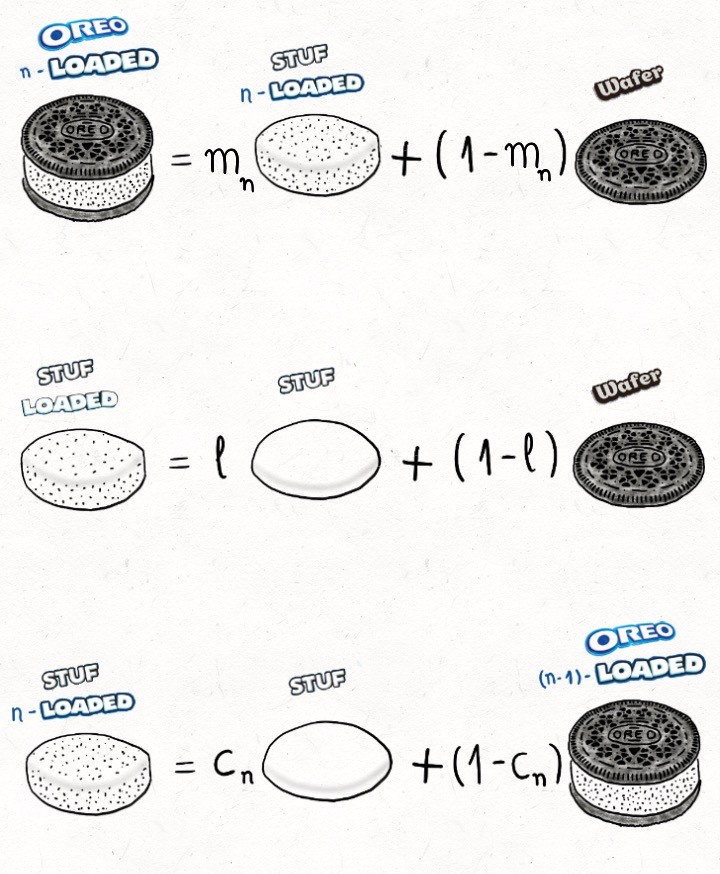}
  \caption{The three composition equations.}\label{fig:oreo-equations}
  \vspace{-8pt}
\end{wrapfigure}

We now have all the data we need.  It remains to write down how the creme
fraction of the stuf changes from one iteration to the next.

\begin{definition}[Creme fraction of the stuf]\label{def:creme-fraction}
Let $c_n$ denote the mass fraction of~$S_n$, the stuf of the Oreo
$n$-Loaded, that is creme.  The remaining fraction $1 - c_n$ is
attributable to wafer crumbs.
\end{definition}

Consider the Mega Stuf Oreo---the Oreo $0$-Loaded.  Its stuf is pure
creme with no crumbs, so
\begin{equation}\label{eq:c0}
    c_0 = 1.
\end{equation}

For $n \geq 1$, the stuf~$S_n$ is produced by combining two ingredients:
the base creme from a fresh Mega Stuf Oreo, and a whole Oreo
$(n{-}1)$-Loaded~$O_{n-1}$ that has been crushed and folded into the
filling.

\begin{definition}[Base creme fraction]\label{def:base-creme-fraction}
Let $p$ denote the mass fraction of~$S_n$ contributed by the base creme.
The remaining fraction $1 - p$ is contributed by the crushed Oreo
$(n{-}1)$-Loaded.
\end{definition}

Since the same physical process---the same base, the same amount of
filling, the same mixing proportions---is used at every step, $p$~does
not depend on~$n$.

The base creme is pure creme, contributing $p \cdot 1$ to the creme
fraction.  The crushed cookie~$O_{n-1}$ is a mixture of stuf and wafer
in proportions $m_{n-1}$ and $1 - m_{n-1}$; its stuf has creme
fraction~$c_{n-1}$, and its wafer contributes no creme.  The creme
fraction of~$S_n$ is therefore
\begin{equation}\label{eq:recursion}
    \boxed{\;c_n \;=\; p \;+\; (1 - p)\,m_{n-1}\,c_{n-1},
    \qquad n \geq 1.\;}
\end{equation}

This is the recursion that drives the construction.  The parameter~$p$
has not yet been determined, but it is not a free variable: it is pinned
down by the filling measurement of the Oreo Loaded.  At $n = 1$, the
recursion reads $c_1 = p + (1-p)\,m_0\,c_0$.  Since $c_0 = 1$ and
$c_1 = \ell$, this gives
\begin{equation}\label{eq:p-value}
    p \;=\; \frac{\ell - m_0}{1 - m_0}
      \;=\; \frac{210/219 - 10/18}{1 - 10/18}
      \;=\; \frac{265}{292}
      \;\approx\; 0.91.
\end{equation}

\subsubsection*{The stuf fraction recursion.}

The recursion~\eqref{eq:recursion} involves the stuf
fraction~$m_{n-1}$, which varies across iterations because the crumbs
in the filling add mass.  Write $w_n$ for the mass of wafer crumbs
in~$S_n$, so that
$m_n = (m_s + w_n)/(m_s + m_w + w_n)$, with $w_0 = 0$.

In~$S_n$, a fraction $1 - p$ of the stuf mass comes from the crushed
cookie~$O_{n-1}$, whose total wafer---biscuits and crumbs
together---has mass $m_w + w_{n-1}$ out of a total
$m_s + m_w + w_{n-1}$.  The wafer crumbs deposited in~$S_n$ therefore
satisfy
\[
    w_n \;=\; (1-p)\,(m_s + w_n)\,
    \frac{m_w + w_{n-1}}{m_s + m_w + w_{n-1}}.
\]
Solving for~$w_n$ gives the explicit recursion
\begin{equation}\label{eq:w-recursion}
    w_n \;=\;
    \frac{(1 - p)\,m_s\,(m_w + w_{n-1})}
         {m_s + p\,(m_w + w_{n-1})},
    \qquad w_0 = 0.
\end{equation}

Having established the recursions~\eqref{eq:recursion}
and~\eqref{eq:w-recursion} and pinned down all of their parameters, we
are ready to compute the $\infty$-Oreo.

\subsection{Solving the recursion}\label{sec:solving}

The recursion~\eqref{eq:recursion} involves the stuf
fraction~$m_{n-1}$, which itself depends on the crumb
mass~$w_n$ via~\eqref{eq:w-recursion}.  We solve for~$w^*$ first,
then use it to determine~$c^*$.

\begin{lemma}[Limiting stuf fraction]\label{lem:m-star}
The sequence~$(w_n)$ defined by~\eqref{eq:w-recursion} converges to
the unique positive root of
\begin{equation}\label{eq:w-quadratic}
    p\,(w^*)^2 \;+\; p\,(m_s + m_w)\,w^*
    \;-\; (1-p)\,m_s\,m_w \;=\; 0.
\end{equation}
The limiting stuf fraction is
$m^* = (m_s + w^*)/(m_s + m_w + w^*)$.
\end{lemma}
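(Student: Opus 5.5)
We will treat \eqref{eq:w-recursion} as a one-dimensional iteration $w_n = f(w_{n-1})$ and apply the standard monotone-sequence argument. The map is
\[
f(x) \;=\; \frac{(1-p)\,m_s\,(m_w + x)}{m_s + p\,(m_w + x)}, \qquad x \ge 0,
\]
with $0<p<1$ by \eqref{eq:p-value} and $m_s, m_w > 0$. It is convenient to substitute $u = m_w + x$, so that $f = g(u)$ with $g(u) = (1-p)m_s u/(m_s + p u)$. This is a Möbius map with positive coefficients. Its derivative is $g'(u) = (1-p)m_s^2/(m_s+pu)^2 > 0$, so $f$ is strictly increasing on $[0,\infty)$. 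It is also bounded above by $(1-p)m_s/p$.

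\textbf{Step 1: fixed points.} A fixed point satisfies $x(m_s + p(m_w+x)) = (1-p)m_s(m_w+x)$. Expanding gives
\[
p x^2 + (m_s + p m_w - (1-p)m_s)x - (1-p)m_s m_w = 0,
\]
that is, $p x^2 + p(m_s+m_w)x - (1-p)m_s m_w = 0$, which is exactly \eqref{eq:w-quadratic}. The product of the roots is $-(1-p)m_s m_w/p < 0$, so there is exactly one positive root $w^*$ and one negative root. This proves the uniqueness claim.

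\textbf{Step 2: monotone convergence.} We have $w_0 = 0$ and $w_1 = f(0) > 0 = w_0$. Since $f$ is increasing, induction gives $w_n \le w_{n+1}$ for all $n$, because $w_{n-1}\le w_n$ implies $f(w_{n-1}) \le f(w_n)$. For the upper bound, $0 < w^*$ gives $w_n \le w^*$ by induction, since $w_{n-1}\le w^*$ implies $w_n = f(w_{n-1}) \le f(w^*) = w^*$. Thus $(w_n)$ is nondecreasing and bounded by $w^*$, so it converges to some limit $L \in (0, w^*]$.

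\textbf{Step 3: identifying the limit.} Because $f$ is continuous on $[0,\infty)$, the limit satisfies $L = f(L)$. So $L$ is a nonnegative root of \eqref{eq:w-quadratic}. Moreover $L>0$, because $L\ge w_1>0$. By Step 1 the only such root is $w^*$, hence $L = w^*$.

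\textbf{Step 4: the stuf fraction.} Finally, $m_n = (m_s + w_n)/(m_s + m_w + w_n)$ is a continuous function of $w_n$, since the denominator is at least $m_s+m_w>0$. Therefore $m_n \to m^* = (m_s+w^*)/(m_s+m_w+w^*)$.

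The only mildly delicate point is ensuring that monotonicity and the bound by $w^*$ fit together, so that the limit cannot be the negative root. The sign of the constant term, together with $w_0=0\le w^*$, settles this. One could alternatively note that $f$ is a contraction near $w^*$, but the monotone argument avoids any derivative estimates.
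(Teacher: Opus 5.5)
Your proof is correct, but it takes a different route from the paper's. The paper uses the contraction mapping principle. It computes $f'(w) = (1-p)m_s^2/(m_s+p(m_w+w))^2$ and bounds it by $1-p<1$ on all of $[0,\infty)$, not just near $w^*$ as your closing remark suggests. So $f$ is a global contraction, and $(w_n)$ converges to its unique fixed point. The paper then identifies that fixed point with the positive root by clearing denominators and applying the quadratic formula. You use only the sign of $f'$ (monotonicity), the ordering $w_0=0<w^*$, and monotone convergence. You get uniqueness of the positive root from the sign of the product of the roots rather than from the explicit formula. The paper's route gives a quantitative bound, $|w_n-w^*|\le(1-p)^n w^*$, and it works for any starting value $w_0\ge 0$ without locating $w^*$ in advance. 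Your route gives extra qualitative information: $w_n$ increases to $w^*$. Since $m_n = 1 - m_w/(m_s+m_w+w_n)$ is increasing in $w_n$, the stuf fraction $m_n$ also increases monotonically to $m^*$, which matches the paper's remark that the crumbs add mass at each iteration. You also verify $m_n\to m^*$ explicitly by continuity, which the paper leaves implicit. Both arguments fully establish the lemma as stated.
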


\begin{proof}
Write $f(w) = (1-p)\,m_s\,(m_w + w)\big/\bigl(m_s + p(m_w + w)\bigr)$
for the right-hand side of~\eqref{eq:w-recursion}.  Its derivative is
\[
    f'(w) \;=\;
    \frac{(1-p)\,m_s^{\,2}}{\bigl(m_s + p(m_w + w)\bigr)^2}
    \;\leq\; 1-p \;<\; 1
\]
for all $w \geq 0$, so $f$ is a contraction.  Since $f$ maps
$[0,\infty)$ into a bounded interval, the sequence converges to a
unique fixed point.  Setting $w^* = f(w^*)$ and clearing denominators
gives~\eqref{eq:w-quadratic}; the positive root is
\[
    w^* \;=\; \frac{-(m_s + m_w)
    + \sqrt{(m_s + m_w)^2 + \dfrac{4(1-p)}{p}\,m_s\,m_w\,}}{2}.
\]
\end{proof}

Substituting $m_s = 10$\,g, $m_w = 8$\,g, and
$p = 265/292$~\eqref{eq:p-value} gives
$w^* \approx 0.44$\,g and
\begin{equation}\label{eq:m-star}
    m^* \;\approx\; 0.566.
\end{equation}

With the limiting stuf fraction in hand, we pass to the limit
on both sides of~\eqref{eq:recursion}.  Setting
$c_n = c_{n-1} = c^*$ and $m_{n-1} = m^*$ gives
$c^* = p + (1-p)\,m^*\,c^*$, so
\begin{equation}\label{eq:fixed-point}
    c^* \;=\; \frac{p}{1 - (1-p)\,m^*}.
\end{equation}
The contraction factor at step~$n$ is $(1-p)\,m_{n-1}$, which is
bounded above by $1-p < 1$ uniformly in~$n$.  Since
$m_n \to m^*$ by Lemma~\ref{lem:m-star}, the
recursion~\eqref{eq:recursion} is a first-order affine recurrence with
convergent coefficients and contraction ratio bounded below~$1$.  The
sequence~$(c_n)$ therefore converges.

\begin{theorem}[The $\infty$-Oreo]\label{thm:infinity-oreo}
The creme fraction of the stuf of the $\infty$-Oreo is
\[
    c^* \;=\; \frac{p}{1 - (1-p)\,m^*}
       \;=\; \frac{265/292}{1 - (27/292) \times 0.566}
       \;\approx\; 0.958.
\]
That is, the stuf of the $\infty$-Oreo is approximately
$95.8\%$~creme and $4.2\%$~wafer.
\end{theorem}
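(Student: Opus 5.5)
The proof splits into two parts: first show that $(c_n)$ converges, then identify its limit. Once convergence is known, the limit must satisfy the fixed-point relation~\eqref{eq:fixed-point}, and the numerical value follows by substituting the parameters.

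\textbf{Step 1: the stuf fraction converges.} By Lemma~\ref{lem:m-star}, $w_n \to w^*$. The map $w \mapsto (m_s+w)/(m_s+m_w+w)$ is continuous on $[0,\infty)$, so $m_n \to m^* = (m_s+w^*)/(m_s+m_w+w^*)$. Moreover $m_n \in [0,1]$ for every $n$.

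\textbf{Step 2: the creme fraction converges.} Set $a_n = (1-p)\,m_{n-1}$. Then $0 \le a_n \le 1-p =: q < 1$, and $a_n \to a^* = (1-p)m^*$. Put $c^* = p/(1-a^*)$ and $e_n = c_n - c^*$. Subtracting $c^* = p + a^* c^*$ from~\eqref{eq:recursion} gives
\[
e_n = a_n e_{n-1} + (a_n - a^*)\,c^*,
\]
and hence
\[
|e_n| \le q\,|e_{n-1}| + \delta_n, \qquad \delta_n = |a_n - a^*|\,c^* \to 0 .
\]
This is the main obstacle: the contraction has non-constant coefficients, so no fixed-point theorem applies directly. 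I would handle it with the standard lemma that $x_n \le q x_{n-1} + \delta_n$, with $q<1$ and $\delta_n \to 0$, forces $x_n \to 0$. To prove it, fix $\varepsilon > 0$ and choose $N$ so that $\delta_n < \varepsilon$ for all $n \ge N$. Unrolling the inequality from $N$ gives
\[
x_n \le q^{n-N} x_N + \frac{\varepsilon}{1-q},
\]
so $\limsup_n x_n \le \varepsilon/(1-q)$. Since $\varepsilon$ is arbitrary, $x_n \to 0$. Therefore $c_n \to c^*$, and the limit is exactly the value in~\eqref{eq:fixed-point}.

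\textbf{Step 3: numerical value.} From~\eqref{eq:p-value}, $p = 265/292$, so $1-p = 27/292$. Lemma~\ref{lem:m-star} with $m_s = 10$ and $m_w = 8$ gives $w^* \approx 0.44$ and $m^* \approx 0.566$ by~\eqref{eq:m-star}. Then
\[
c^* \approx \frac{0.9075}{1 - 0.0925 \times 0.566} \approx \frac{0.9075}{0.9477} \approx 0.958,
\]
so the wafer share of the stuf is $1 - c^* \approx 0.042$. Because this is a numerical approximation, I would also check that the error in $m^*$ from rounding $w^*$ does not affect the third decimal of $c^*$. This holds because $c^*$ is Lipschitz in $m^*$ with constant at most $p(1-p)/(1-(1-p))^2 = (1-p)/p \approx 0.1$.
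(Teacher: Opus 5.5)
Your proposal is correct and takes the same route as the paper. Lemma~\ref{lem:m-star} gives $w_n \to w^*$, hence $m_n \to m^*$, and the affine recurrence with convergent coefficients and uniform contraction factor $(1-p)m_{n-1} \le 1-p < 1$ then forces $c_n \to p/(1-(1-p)m^*)$. Your perturbed-contraction lemma just makes explicit the step the paper states in one line, and your numerics, including the Lipschitz bound $(1-p)/p$, check out.
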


\begin{proof}
The fixed-point equation is~\eqref{eq:fixed-point}.  Convergence
of~$(w_n)$ is given by Lemma~\ref{lem:m-star}; convergence
of~$(c_n)$ follows from the uniform contraction bound
$(1-p)\,m_{n-1} \leq 1-p < 1$.
\end{proof}

The stuf of the $\infty$-Oreo is close in composition to that of the
Oreo Loaded ($\ell \approx 0.96$), but the two are not identical.
Pinning down the exact difference requires more precise measurements
than those currently available.

\medskip

Note that the fixed point~\eqref{eq:fixed-point} does not depend
on~$c_0$.  This is not a coincidence.

\begin{corollary}[Independence of initial conditions]\label{cor:independence} 
The sequence~$(c_n)$ converges to~\eqref{eq:fixed-point} for every choice
of $c_0 \in [0,1]$.
\end{corollary}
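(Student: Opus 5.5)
The plan is to show that the initial value $c_0$ enters $c_n$ only through a product of contraction factors, and that this product goes to zero. The key observation is that the sequence $(m_n)$ is determined entirely by the recursion~\eqref{eq:w-recursion} for $w_n$, which never involves $c_n$. Hence the coefficients of the affine recursion~\eqref{eq:recursion} are the same for every choice of $c_0$. Concretely, fix two initial values $c_0, \tilde c_0 \in [0,1]$ and let $(c_n)$ and $(\tilde c_n)$ be the resulting sequences. Subtracting the two instances of~\eqref{eq:recursion} removes the constant term $p$ and leaves
\[
    c_n - \tilde c_n \;=\; (1-p)\,m_{n-1}\,\bigl(c_{n-1} - \tilde c_{n-1}\bigr).
\]
Iterating this gives
\[
    |c_n - \tilde c_n| \;=\; \Bigl(\prod_{k=0}^{n-1} (1-p)\,m_k\Bigr)\,|c_0 - \tilde c_0| \;\le\; (1-p)^n .
\]
This uses $m_k \in [0,1]$, since $m_k$ is a mass fraction, and $|c_0 - \tilde c_0| \le 1$. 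Because $0 < 1-p = 27/292 < 1$, the difference tends to zero.

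Next I take $\tilde c_0 = 1$, the case already treated in Theorem~\ref{thm:infinity-oreo}. There $\tilde c_n \to c^*$ as given by~\eqref{eq:fixed-point}. The bound above then gives $c_n \to c^*$ for every $c_0 \in [0,1]$.

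The step I expect to need the most care is the convergence of $(\tilde c_n)$ itself, because the remark preceding the theorem only sketches it. If it has to be made self-contained, I would write $c_n - c^* = (1-p)m_{n-1}(c_{n-1} - c^*) + (1-p)(m_{n-1} - m^*)c^*$. Setting $e_n = |c_n - c^*|$, this yields $e_n \le (1-p)\,e_{n-1} + \delta_{n-1}$ with $\delta_n \to 0$ by Lemma~\ref{lem:m-star}. A standard argument then gives $\limsup_n e_n \le \limsup_n \delta_n/p = 0$. This argument does not use the value of $c_0$ at all, so it could even replace the comparison step. I would keep both, since the comparison bound $(1-p)^n$ also shows the exponential rate at which the initial condition is forgotten.
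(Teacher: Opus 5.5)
Your argument is correct. It rests on the same two ingredients as the paper:
\begin{itemize}
\item the coefficients $m_{n-1}$ come from \eqref{eq:w-recursion} and do not depend on $c_0$;
\item the step factor obeys $(1-p)\,m_{n-1}\le 1-p<1$.
\end{itemize}
What differs is how you combine them.

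The paper goes straight from the uniform bound to $|c_n-c^*|\to 0$. It relies on the remark before Theorem~\ref{thm:infinity-oreo} that an affine recurrence with convergent coefficients and contraction ratio bounded below $1$ converges. It leaves implicit the inhomogeneous term $(1-p)(m_{n-1}-m^*)c^*$, which appears because $c^*$ is not a fixed point of the step map $x\mapsto p+(1-p)m_{n-1}x$ when $m_{n-1}\neq m^*$.

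Your main step instead compares two trajectories driven by the same coefficients. There the contraction is exact and there is no error term, and you get an explicit rate $|c_n-\tilde c_n|\le(1-p)^n$ at which the initial condition is forgotten. The cost is that you borrow the identification of the limit from the case $\tilde c_0=1$ in Theorem~\ref{thm:infinity-oreo}, whose convergence proof is itself only sketched, as you noticed.

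Your fallback estimate $e_n\le(1-p)e_{n-1}+\delta_{n-1}$ with $\delta_n\to 0$ is what makes the paper's direct route rigorous. It uses Lemma~\ref{lem:m-star} and the boundedness $e_n\le 1$, and it settles the theorem and the corollary at once. Keeping both pieces, as you propose, gives a complete proof together with a rate.
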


\begin{proof}
The recursion~\eqref{eq:w-recursion} for~$w_n$ does not
involve~$c_0$, so $m^*$ and the fixed
point~\eqref{eq:fixed-point} are independent of the initial creme
fraction.  The contraction factor $(1-p)\,m_{n-1}$ is bounded above
by $1-p < 1$, so $|c_n - c^*| \to 0$ for every starting value.
\end{proof}

The $\infty$-Oreo is, in this sense, an attractor: no matter what cookie
one begins with, iterating the process of mixing it into a Mega Stuf base
always converges to the same composition.

\medskip

But the $\infty$-Oreo involves only one food: an Oreo references itself, and
the recursion stays within the world of Oreos.  What happens when two
\emph{different} foods reference each other?
\newpage


\section{Bi-$\infty$ Foods}\label{sec:bi-infinity}

\begin{wrapfigure}{R}{0.5\textwidth}
  \centering
  \vspace{-10pt}
  \framedimg[width=\linewidth]{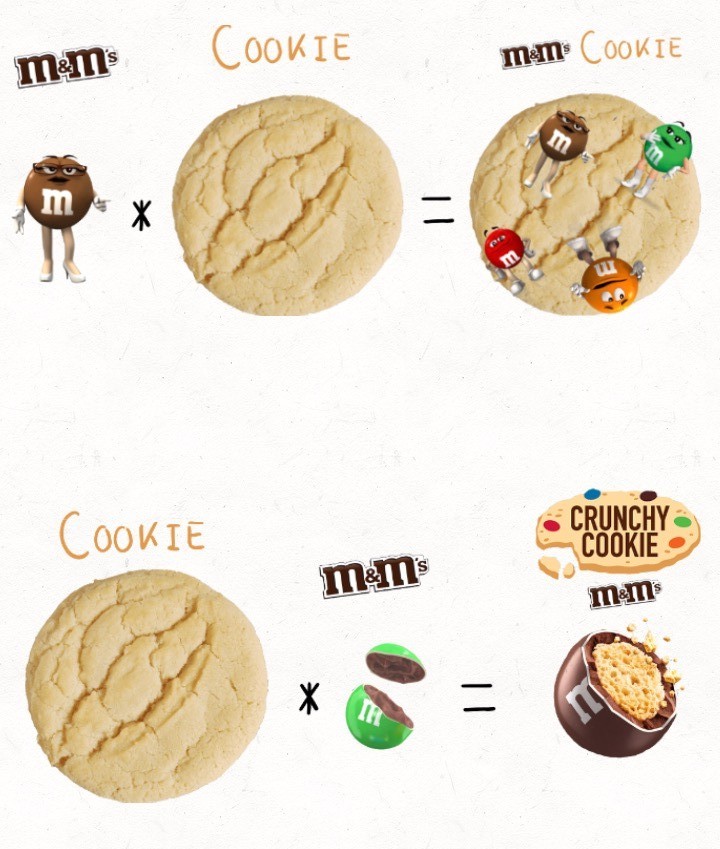}
  \caption{Food multiplication of M\&M and cookie.}\label{fig:mm-product}
  \vspace{-6pt}
\end{wrapfigure}

Recall that the Oreo Loaded is, at bottom, a Mega Stuf Oreo mixed with
a Mega Stuf Oreo.  We now abstract this operation to arbitrary pairs of
foods.

\begin{definition}[Food multiplication]\label{def:food-multiplication}
Given two foods $A$ and $B$, the \textbf{product} $A \ast B$ is the food
obtained by incorporating~$A$ into~$B$.  That is, $A \ast B$ is food~$B$
with food~$A$ mixed in.
\end{definition}

This notation recasts the construction of \S\ref{sec:infinity-oreo}. If we define powers recursively by $A^1\coloneqq A$ and $A^{n+1}\coloneqq A\ast A^n$, the Oreo Loaded becomes $\text{Mega Stuf}\;^2$:
\begin{align*}
    \text{Oreo Loaded}
    \;=\;& \text{Mega Stuf} \ast \text{Mega Stuf} \\
    \;=\;& \text{Mega Stuf}\;^2.
\end{align*}
    
More generally, the Oreo $n$-Loaded is the result of $n{+}1$~successive
multiplications:
\[
    \text{Oreo $n$-Loaded} \;=\; \text{Mega Stuf}\;^{n+1},
\]
and the $\infty$-Oreo is, in this notation, $\text{Mega Stuf}\;^\infty$.

The operation becomes more interesting when $A$ and $B$ are different
foods.  Consider M\&M's and cookies.

\begin{example}[M\&M Cookie]\label{ex:mm-cookie}
$\text{M\&M} \ast \text{Cookie}$ is a cookie with M\&M's mixed into the
dough: an \textbf{M\&M Cookie}.  This is a standard bakery item,
commercially available from a small number of producers that make them
without additional chocolate chips.\footnote{At the time of writing,
these include the Walmart in-store bakery and 4th Street Famous Cookies.
The absence of chocolate chips is essential: the product must be an
M\&M Cookie, not a chocolate chip cookie that also contains M\&M's.}
\end{example}

\begin{example}[Crunchy Cookie M\&M]\label{ex:crunchy-cookie-mm}
$\text{Cookie} \ast \text{M\&M}$ is an M\&M with cookie mixed into its
interior: a \textbf{Crunchy Cookie M\&M}.  This is also a commercially
available product, sold by Mars, Inc.\ under the M\&M's Crunchy Cookie
line.
\end{example}

Note that $\text{M\&M} \ast \text{Cookie}$ and $\text{Cookie} \ast
\text{M\&M}$ are not the same food. 

\begin{proposition}[Non-commutativity]\label{prop:non-commutative}
Food multiplication is not commutative.  In general,
$A \ast B \neq B \ast A$.
\end{proposition}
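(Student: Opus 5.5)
To prove non-commutativity it suffices to exhibit a single pair of foods $A$, $B$ with $A \ast B \neq B \ast A$. We take the pair from Examples~\ref{ex:mm-cookie} and~\ref{ex:crunchy-cookie-mm}: $A = \text{M\&M}$ and $B = \text{Cookie}$. By Definition~\ref{def:food-multiplication}, $\text{M\&M} \ast \text{Cookie}$ is a cookie with M\&M's mixed in, and $\text{Cookie} \ast \text{M\&M}$ is an M\&M with cookie mixed in. The remaining work is to show that these two foods are genuinely different.

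To do this we will use an invariant that the product preserves. Definition~\ref{def:food-multiplication} says that $A \ast B$ is \emph{food~$B$} with $A$ mixed in, so the outer food, or base, of $A \ast B$ is $B$. We will make this precise by saying that the base is the food whose structure contains the other, and we will read this off directly from the definition. With that in place, the base of $\text{M\&M} \ast \text{Cookie}$ is a cookie: a baked disk of dough with candies inside. The base of $\text{Cookie} \ast \text{M\&M}$ is an M\&M: a candy shell enclosing a chocolate center with cookie inside. A cookie and an M\&M are different foods, for instance because they differ in mass and in outer structure (baked dough versus a sugar shell). Since the two products have different bases, they are different foods.

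The main obstacle is that ``food'' and equality of foods are never formally defined. Our argument relies on the assumption that two foods are equal only if their bases coincide, so the base must be treated as a well-defined invariant. We justify this by taking Definition~\ref{def:food-multiplication} literally, since it identifies $A \ast B$ as food~$B$. As a sanity check, we can also compare a measured quantity such as the mass fraction of the outer component, in the spirit of \S\ref{sec:composition-equations}. This comparison should clearly separate a roughly cookie-sized M\&M Cookie from a candy-sized Crunchy Cookie M\&M. Since one counterexample is enough, the claim ``in general $A \ast B \neq B \ast A$'' follows. We would also note that commutativity can hold in trivial cases such as $A = B$, so the statement is exactly the failure of commutativity in general.
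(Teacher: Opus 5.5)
Your proposal is correct and is essentially the paper's proof. The paper uses the same M\&M/Cookie pair and simply observes that M\&M $\ast$ Cookie is a cookie while Cookie $\ast$ M\&M is a candy, which is exactly your ``base'' invariant read off from Definition~\ref{def:food-multiplication}. The extra scaffolding (stating the base invariant explicitly, the mass sanity check, the remark about $A = B$) is not needed but does no harm.
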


\begin{proof}
$\text{M\&M} \ast \text{Cookie}$ is a cookie.
$\text{Cookie} \ast \text{M\&M}$ is a candy.
\end{proof}

The existence of both products is what makes bi-$\infty$ foods
possible.  We now iterate.

\subsection{Coupled recursion}\label{sec:coupled-recursion}

With two different foods, the iteration splits into two sequences that
feed into each other.  Write $m_{n}$ for M\&M and $C$ for Cookie.  The two
products from the previous subsection are
\[
    M \ast C \;=\; \text{M\&M Cookie},
    \qquad
    C \ast M \;=\; \text{Crunchy Cookie M\&M}.
\]
These are the starting points---the analogues of the Oreo Loaded.

\subsubsection*{First iteration.}

\begin{wrapfigure}{R}{0.5\textwidth}
  \centering
  \vspace{-10pt}
  \framedimg[width=\linewidth]{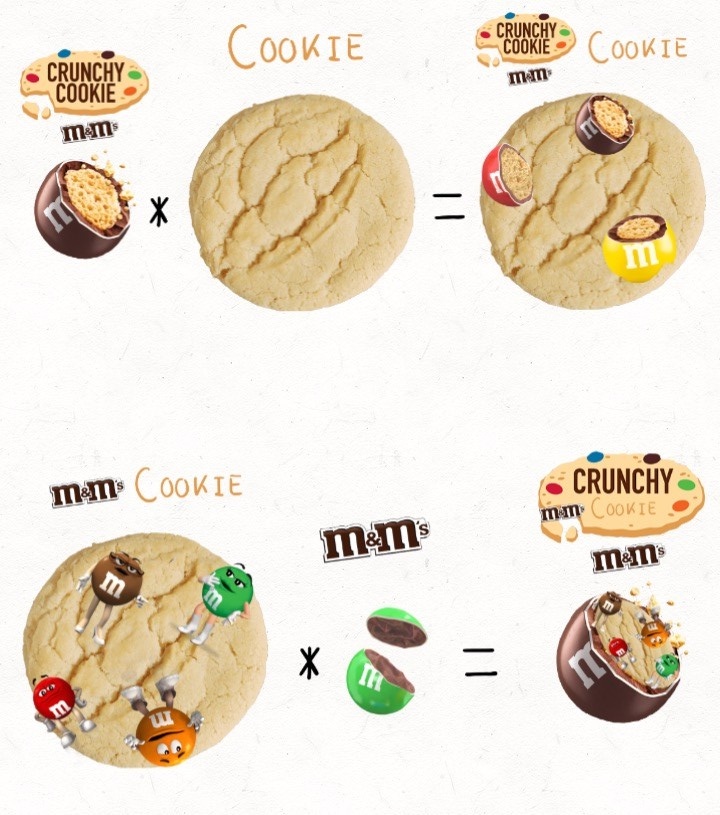}
  \caption{The first iteration of the coupled recursion.}\label{fig:mm-iteration}
  \vspace{-8pt}
\end{wrapfigure}

On the cookie side, instead of mixing plain M\&M's into a cookie, we
mix in Crunchy Cookie M\&M's---M\&M's that already contain cookie.
The result is
\[
    (C \ast M) \ast C,
\]
a cookie whose M\&M's have cookie inside them: the \textbf{Crunchy
Cookie M\&M Cookie}.

On the candy side, instead of mixing plain cookie into an M\&M, we mix
in an M\&M Cookie---a cookie that already contains M\&M's.  The result
is
\[
    (M \ast C) \ast M,
\]
an M\&M whose cookie interior contains M\&M's: the \textbf{Crunchy
M\&M Cookie M\&M}.

The cookie at this step uses the candy from the
previous step, and vice versa.

\subsubsection*{The general case.}

At each step, each sequence absorbs the latest
product from the other.

\begin{definition}[Coupled recursion]\label{def:coupled-recursion}
Define two sequences of foods $(P_n)_{n \geq 0}$ and $(Q_n)_{n \geq 0}$
by
\[
    P_0 \;\coloneqq\; M \ast C, \qquad Q_0 \;\coloneqq\; C \ast M,
\]
and, for $n \geq 0$,
\begin{equation}\label{eq:coupled-recursion}
    P_{n+1} \;\coloneqq\; Q_n \ast C, \qquad Q_{n+1} \;\coloneqq\; P_n \ast M.
\end{equation}
\end{definition}

Here $P_n$ is always a cookie and $Q_n$ is always an M\&M, but the
interior of each grows one layer deeper at every step: the cookie~$P_n$
contains an M\&M that contains a cookie that contains an M\&M,
$n$~layers deep.

This is a coupled recurrence relation that has a limit.

\begin{definition}[Bi-$\infty$ M\&M Cookie]\label{def:bi-infinity-mm}
The \textbf{$\infty$-M\&M Cookie} and the \textbf{$\infty$-Crunchy
Cookie M\&M} are the limits of the coupled sequences:
\[
    P_\infty \;\coloneqq\; \lim_{n \to \infty} P_n,
    \qquad
    Q_\infty \;\coloneqq\; \lim_{n \to \infty} Q_n.
\]
Together, they form a \textbf{bi-$\infty$ food}: a pair of foods, each
of which contains the other, all the way down.
\end{definition}

By Proposition~\ref{prop:non-commutative}, these two limits are not the
same food: one is a cookie and the other is a candy, no matter how many
layers of recursion one performs.  No physical cookie can contain
infinitely many alternating layers, but the mass fractions converge as
$n \to \infty$, and the limiting ratios are concrete.  One could produce
a deconstructed version of either limit food by mixing the ingredients in
the computed proportions.

\begin{figure}[ht]
  \centering
  \framedimg[width=0.80\textwidth]{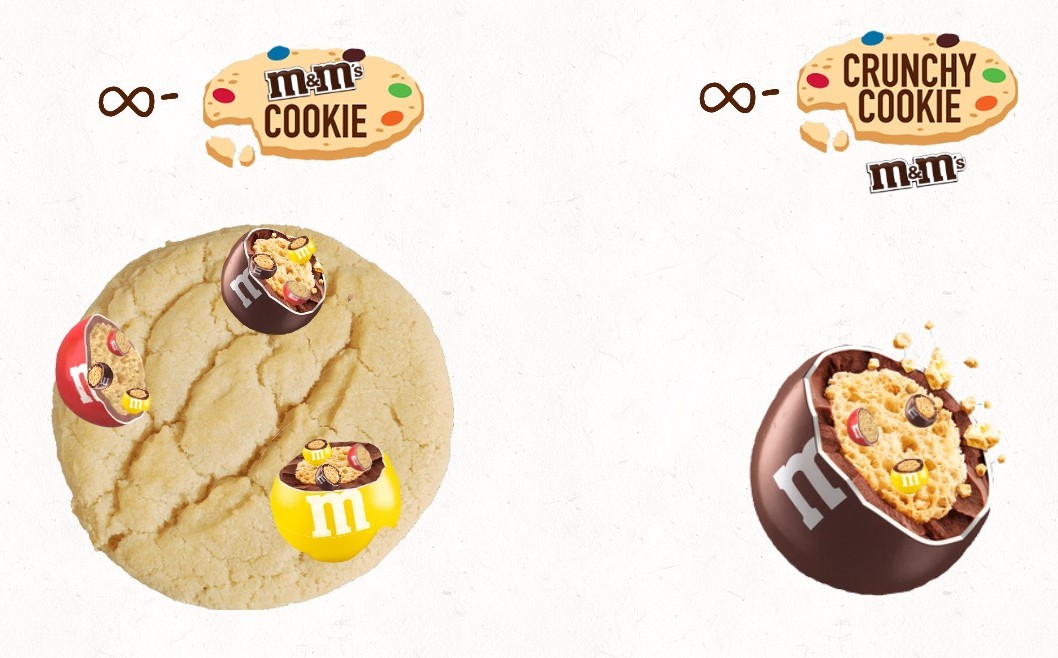}
  \caption{The  bi-$\infty$ food pair $\infty$-M\&M Cookie
  (left) and $\infty$-Crunchy Cookie
  M\&M (right) is the limit of the coupled recursion.   }\label{fig:bi-infinity}
\end{figure}

To determine these proportions, we need to set up and solve the system of
equations governing the coupled recursion.

\subsection{The composition equations}\label{sec:bi-composition}

In \S\ref{sec:composition-equations}, we tracked a single quantity---the
creme fraction of the stuf---across iterations.  The coupled recursion
requires two, one for each sequence.

Every product in both sequences is, at the level of raw ingredients, a
mixture of two fundamental materials: \textbf{cookie dough} and
\textbf{M\&M material} (candy shell and chocolate).  Our goal is to
determine how the proportions of these two materials evolve, and what
they converge to.

This requires two measured quantities and a pair of equations.  We
introduce the measurements first.

\subsubsection*{The M\&M Cookie composition.}

The M\&M Cookie~$P_n$ is always a cookie: a base of cookie dough with
some amount of M\&M product mixed in.  Since the same recipe is used at
every step (the same dough, the same quantity of mix-in), the overall
split between dough and mix-in is the same for every~$n$.

\begin{definition}[M\&M mix-in fraction]\label{def:mu}
Let $\mu$ denote the mass fraction of an M\&M Cookie that consists of
M\&M mix-ins.  The remaining fraction $1 - \mu$ is cookie dough.
\end{definition}

\subsubsection*{The Crunchy Cookie M\&M composition.}

\begin{wrapfigure}{R}{0.5\textwidth}
  \centering
  \vspace{-10pt}
  \framedimg[width=\linewidth]{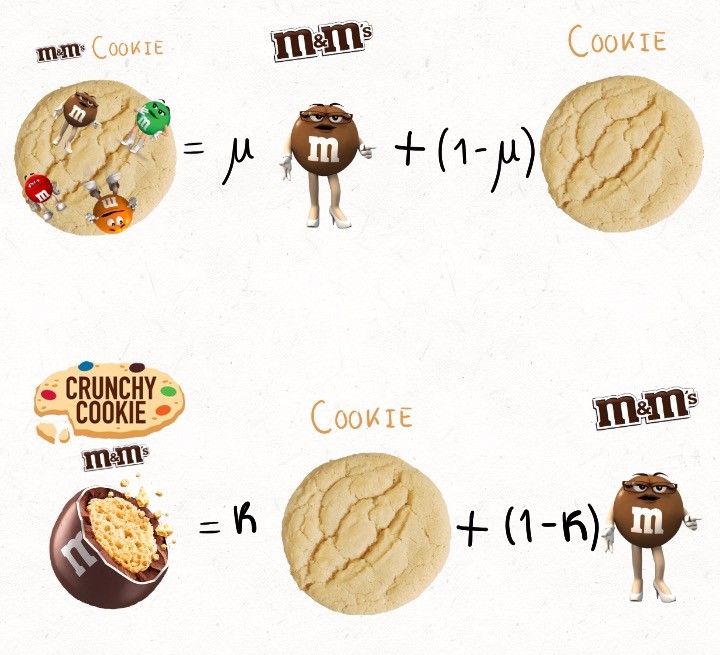}
  \caption{The composition equations for the M\&M Cookie system.}\label{fig:mm-equations}
  \vspace{-8pt}
\end{wrapfigure}

The Crunchy Cookie M\&M~$Q_n$ is, at every step, an M\&M: a base of
M\&M material with some amount of cookie product mixed in.

\begin{definition}[Cookie mix-in fraction]\label{def:kappa}
Let $\kappa$ denote the mass fraction of a Crunchy Cookie M\&M that
consists of cookie mix-ins.  The remaining fraction $1 - \kappa$ is M\&M
material.
\end{definition}

The parameters~$\mu$ and~$\kappa$ play the same role here that~$\ell$ played in the Oreo analysis: they are empirical inputs,
determined by the recipes, that pin down the recursion.

\subsubsection*{The recursion.}

We now track how the composition of each product changes across
iterations.

\begin{definition}[M\&M fraction and cookie fraction]\label{def:an-bn}
Let $a_n$ denote the mass fraction of~$P_n$ that is M\&M material, and
let $b_n$ denote the mass fraction of~$Q_n$ that is cookie dough.
\end{definition}

At step~$0$, the M\&M's inside the cookie are plain M\&M's (pure M\&M
material), and the cookie inside the M\&M is plain cookie (pure dough),
so
\begin{equation}\label{eq:ab-initial}
    a_0 = \mu, \qquad b_0 = \kappa.
\end{equation}

For $n \geq 0$, the cookie~$P_{n+1}$ is produced by mixing~$Q_n$ into a
fresh cookie base.  A fraction~$1 - \mu$ of~$P_{n+1}$ is cookie dough,
contributing no M\&M material.  The remaining fraction~$\mu$ is~$Q_n$,
of which a fraction $1 - b_n$ is M\&M material and $b_n$ is cookie
dough.  The M\&M material in~$P_{n+1}$ therefore comes entirely from
the~$Q_n$ portion:
\[
    a_{n+1} \;=\; \mu\,(1 - b_n).
\]
Symmetrically, the M\&M~$Q_{n+1}$ is produced by mixing~$P_n$ into a
fresh M\&M base.  A fraction~$1 - \kappa$ is M\&M material and the
remaining fraction~$\kappa$ is~$P_n$, of which $1 - a_n$ is cookie
dough.  So:
\[
    b_{n+1} \;=\; \kappa\,(1 - a_n).
\]

Together, these give the coupled recursion:
\begin{equation}\label{eq:bi-recursion}
    \boxed{\;a_{n+1} \;=\; \mu\,(1 - b_n),
    \qquad
    b_{n+1} \;=\; \kappa\,(1 - a_n),
    \qquad n \geq 0.\;}
\end{equation}

Note the coupling: the M\&M content of the cookie at step~$n{+}1$
depends on the cookie content of the M\&M at step~$n$, and vice versa.

\subsection{Solving the coupled recursion}\label{sec:bi-solving}

The system~\eqref{eq:bi-recursion} is a pair of affine recurrences, each
feeding into the other.  It can be solved by the same fixed-point
strategy used in \S\ref{sec:solving}.

If the sequences converge to limits~$a^*$ and~$b^*$, then passing to the
limit on both sides of~\eqref{eq:bi-recursion} gives
\[
    a^* \;=\; \mu\,(1 - b^*),
    \qquad
    b^* \;=\; \kappa\,(1 - a^*).
\]
Substituting the second equation into the first:
\[
    a^* \;=\; \mu\bigl(1 - \kappa(1 - a^*)\bigr)
         \;=\; \mu\,(1 - \kappa) \;+\; \mu\kappa\,a^*.
\]
Solving for~$a^*$, and repeating for~$b^*$:
\begin{equation}\label{eq:bi-fixed-point}
    a^* \;=\; \frac{\mu\,(1-\kappa)}{1 - \mu\kappa},
    \qquad
    b^* \;=\; \frac{\kappa\,(1-\mu)}{1 - \mu\kappa}.
\end{equation}

It remains to verify convergence.  Substituting the recursion into
itself, we find that each sequence satisfies a decoupled recurrence at
every other step:
\begin{equation}\label{eq:bi-decoupled}
    a_{n+2} \;=\; \mu(1-\kappa) \;+\; \mu\kappa\,a_n,
    \qquad
    b_{n+2} \;=\; \kappa(1-\mu) \;+\; \mu\kappa\,b_n.
\end{equation}
Subtracting~\eqref{eq:bi-fixed-point}
from~\eqref{eq:bi-decoupled}:
\[
    a_{n+2} - a^* \;=\; \mu\kappa\,(a_n - a^*),
    \qquad
    b_{n+2} - b^* \;=\; \mu\kappa\,(b_n - b^*).
\]
Since $0 < \mu < 1$ and $0 < \kappa < 1$, the contraction factor
$\mu\kappa$ lies in $(0,1)$, and both sequences converge geometrically.

\begin{theorem}[Bi-$\infty$ M\&M Cookie]\label{thm:bi-infinity}
The $\infty$-M\&M Cookie has M\&M-material fraction
\[
    a^* \;=\; \frac{\mu\,(1-\kappa)}{1 - \mu\kappa}
\]
and cookie-dough fraction $1 - a^*$.  The $\infty$-Crunchy Cookie M\&M
has cookie-dough fraction
\[
    b^* \;=\; \frac{\kappa\,(1-\mu)}{1 - \mu\kappa}
\]
and M\&M-material fraction $1 - b^*$.
\end{theorem}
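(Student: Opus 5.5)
The plan is to show that the sequences $(a_n)$ and $(b_n)$ of \eqref{eq:bi-recursion} converge to the values in \eqref{eq:bi-fixed-point}. Their complements, $1-a^*$ and $1-b^*$, are then the remaining fractions by \cref{def:an-bn}. I will treat the recursion as an affine map on the pair $(a,b)$ and follow the same fixed-point strategy used for \cref{thm:infinity-oreo}. Throughout I assume $0<\mu<1$ and $0<\kappa<1$, which is the physically meaningful regime: the mix-in is a proper, nonzero part of each product.

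First, I identify the candidate limit. The map $(a,b)\mapsto(\mu(1-b),\,\kappa(1-a))$ has a unique fixed point, because the linear system
\[
a=\mu-\mu b,\qquad b=\kappa-\kappa a
\]
has determinant $1-\mu\kappa\neq 0$. Solving it by substitution, as in the derivation preceding the statement, gives \eqref{eq:bi-fixed-point}. As a consistency check, $0\le a^*,b^*\le 1$, so both values are genuine mass fractions.

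Second, I prove convergence. Set $e_n=a_n-a^*$ and $f_n=b_n-b^*$. Subtracting the fixed-point equations from \eqref{eq:bi-recursion} gives the homogeneous system
\[
e_{n+1}=-\mu f_n,\qquad f_{n+1}=-\kappa e_n .
\]
Applying it twice yields $e_{n+2}=\mu\kappa\,e_n$ and $f_{n+2}=\mu\kappa\,f_n$, which is \eqref{eq:bi-decoupled} written in error form. Splitting into even and odd indices then gives explicit formulas:
\[
e_{2k}=(\mu\kappa)^k e_0,\qquad e_{2k+1}=(\mu\kappa)^k e_1,
\]
and likewise for $f_n$. Hence
\[
|e_n|\le(\mu\kappa)^{\lfloor n/2\rfloor}\max(|e_0|,|e_1|)\to 0 .
\]
The same bound holds for $f_n$. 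Both subsequences therefore converge geometrically to zero, and so $a_n\to a^*$ and $b_n\to b^*$.

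The only delicate point is the parity split. A single step is not obviously a contraction in every norm, since it swaps the two coordinates. Passing to the two-step map avoids this, and taking the maximum over the two starting errors handles both parities uniformly. The initial values \eqref{eq:ab-initial} play no role in the limit, which mirrors \cref{cor:independence}.
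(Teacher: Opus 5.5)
Your proposal is correct and follows essentially the same route as the paper: you solve the linear fixed-point system, then pass to the two-step error recurrence $e_{n+2}=\mu\kappa\,e_n$ (the paper's \eqref{eq:bi-decoupled} minus \eqref{eq:bi-fixed-point}) and conclude geometric convergence from $\mu\kappa\in(0,1)$. Your parity split with the bound $(\mu\kappa)^{\lfloor n/2\rfloor}\max(|e_0|,|e_1|)$ just makes the paper's convergence step more explicit. Incidentally, the one-step map $(e,f)\mapsto(-\mu f,-\kappa e)$ is already a sup-norm contraction with factor $\max(\mu,\kappa)$, so the split is convenient rather than necessary.
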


\begin{proof}
The fixed-point equations and the convergence argument are given above;
see~\eqref{eq:bi-fixed-point} and~\eqref{eq:bi-decoupled}.
\end{proof}

The two products are always compositionally distinct: the cookie
always has more cookie dough than the M\&M does, and vice versa.
To see this, note that equal cookie-dough fractions would require
$a^* + b^* = 1$, but
\[
    a^* + b^* \;=\; \frac{\mu + \kappa - 2\mu\kappa}{1 - \mu\kappa}
    \;<\; 1
\]
for all $\mu, \kappa \in (0,1)$.  The non-commutativity
of Proposition~\ref{prop:non-commutative} persists in the limit.

The iteration makes each product \emph{more like itself}.
The M\&M-material fraction of the cookie decreases from $a_0 = \mu$ to
$a^* = \mu(1-\kappa)/(1-\mu\kappa) < \mu$, and similarly $b^* < \kappa$.
This is because the M\&M's being mixed into the cookie are no longer
pure M\&M---they contain cookie, which dilutes the foreign component.
Each product converges toward a purer version of itself, though neither
reaches purity as long as $\mu, \kappa > 0$.  As in the Oreo case,
the contraction factor~$\mu\kappa$ drives any starting point to the
same fixed point, so the limits are independent of initial conditions.

\begin{remark}\label{rem:bi-measurements}
The formulas in Theorem~\ref{thm:bi-infinity} are exact, but their
numerical evaluation requires measurements of~$\mu$ and~$\kappa$.
These measurements have not yet been carried out.  A
determined reader with a kitchen scale, a bag of Crunchy Cookie M\&M's,
and a box of M\&M Cookies from their preferred manufacturer is invited to
complete the computation.
\end{remark}


\section{Classification of $\infty$-Foods}\label{sec:classification}

The $\infty$-Oreo is built from a single food iterating with itself;
the $\infty$-M\&M Cookie from two foods iterating with each other.
In this section we classify $\infty$-foods by the number of foods
involved in the recursion and begin to study the structures that
govern them.

\subsection{Mono-$\infty$ foods}\label{sec:mono-infinity}

We begin with the case of a single food.

\begin{definition}[Mono-$\infty$ food]\label{def:mono-infinity}
A food~$A$ forms a \textbf{mono-$\infty$ food} if the product $A^2 = A \ast A$
exists as a commercially available food.  The \textbf{$\infty$-$A$} is
the limit $A^\infty \coloneqq \lim_{n \to \infty} A^n$.
\end{definition}

The obstacle to producing a mono-$\infty$ food is not mathematical but
commercial.  Once~$A^2$ exists, the recurrence is determined, and the
limit follows by the contraction argument of
\S\ref{sec:solving}.  The hard part is that most foods are flavored
with something other than themselves; a product whose interior contains
a version of itself is an unusual thing to manufacture.

The Oreo Loaded is the only example known to the author: it
is, as established in \S\ref{sec:product-line}, a Mega Stuf Oreo mixed
with a Mega Stuf Oreo, and hence the $\infty$-Oreo exists by
Theorem~\ref{thm:infinity-oreo}.  There are rumors of a
KitKat-flavored KitKat produced in Japan, where the KitKat product line
is known for its extensive roster of regional
flavors.\footnote{Japanese KitKat flavors include, among many others,
matcha, sake, strawberry cheesecake, and sweet
potato~\cite{kitkat_japan}.  A self-referential entry in this list would
not be entirely out of character.}  No photographic evidence, product
listing, or official confirmation has been located.  Until then, the $\infty$-Oreo remains the only confirmed mono-$\infty$ food.

\subsection{Bi-$\infty$ foods}\label{sec:bi-infinity-foods}

We now allow two foods.

\begin{definition}[Bi-$\infty$ food]\label{def:bi-infinity}
Two foods $A$ and $B$ form a \textbf{bi-$\infty$ food} pair if both products
$A \ast B$ and $B \ast A$ exist as commercially available foods.  The
\textbf{$\infty$-$(A,B)$ pair} consists of the two limits
\[
    (A \ast B)^\infty \;\coloneqq\; \lim_{n \to \infty} P_n,
    \qquad
    (B \ast A)^\infty \;\coloneqq\; \lim_{n \to \infty} Q_n,
\]
where $(P_n)$ and $(Q_n)$ are the coupled sequences of
Definition~\ref{def:coupled-recursion}.
\end{definition}

The requirement that \emph{both} products exist is essential.  A cookie
with M\&M's inside it is not, by itself, enough to produce a bi-$\infty$
food; one also needs an M\&M with cookie inside it.  The two products
can be manufactured by different companies, and neither has any reason to
exist just because the other does.  That both happen to be commercially
available is rare, but not as rare as the self-referential products
required for mono-$\infty$ foods.

The M\&M Cookie and the Crunchy Cookie M\&M form the primary example,
analyzed in \S\ref{sec:bi-infinity}.  Other candidates include Taco
Bell's Doritos Locos Tacos and Frito-Lay's taco-flavored Doritos
($A = \text{Doritos}$, $B = \text{Taco}$), as well as ramen-flavored
Pringles and Pringles-flavored ramen, available in Japan too
($A = \text{Pringles}$, $B = \text{Ramen}$).  In each case, the coupled
recursion of \S\ref{sec:coupled-recursion} applies and the $\infty$-pair
is computable in principle.  The numerical evaluation is left as an open
problem, pending the development of a reliable method for measuring how
much taco is in a Dorito.

\subsection{Tri-$\infty$ foods}\label{sec:tri-infinity}

Mono-$\infty$ foods are built from a single food; bi-$\infty$ foods from
a pair.  We now ask whether three foods can be arranged so that each one
references the next, with the last referencing the first.

Rather than starting with a definition, we start with an attempt to build
one.

\medskip

Take an Oreo and crush it into  ice cream.  The result is
cookies-and-cream ice cream: ice cream with Oreo inside.  This is
available at almost every supermarket, and it gives us our first
arrow:
\[
    \text{Oreo} \;\longrightarrow\; \text{Ice cream}.
\]

Now take that ice cream and put it inside a cake.  The result is an ice
cream cake---a common phenomenon in United States birthday
celebrations.\footnote{The author, raised in the Canary Islands, first
encountered the ice cream cake upon arriving in the United States and has
not yet fully come to terms with it.}  This gives a second arrow:
\[
    \text{Ice cream} \;\longrightarrow\; \text{Cake}.
\]

To close the loop, we need to put a cake inside an Oreo.  This seems
difficult---until one remembers that we are probably at a birthday
party.  Recall the \birthdaycake{} from \S\ref{sec:motivations}, 
an Oreo whose filling is flavored with birthday cake, regardless of what flavor of cake one is celebrating with.  We now have the
third arrow:
\[
    \text{Cake} \;\longrightarrow\; \text{Oreo}.
\]

The three arrows form a closed loop.  Every arrow corresponds to a
commercially available product.

\begin{figure}[ht]
  \centering
  \framedimg[width=0.82\textwidth]{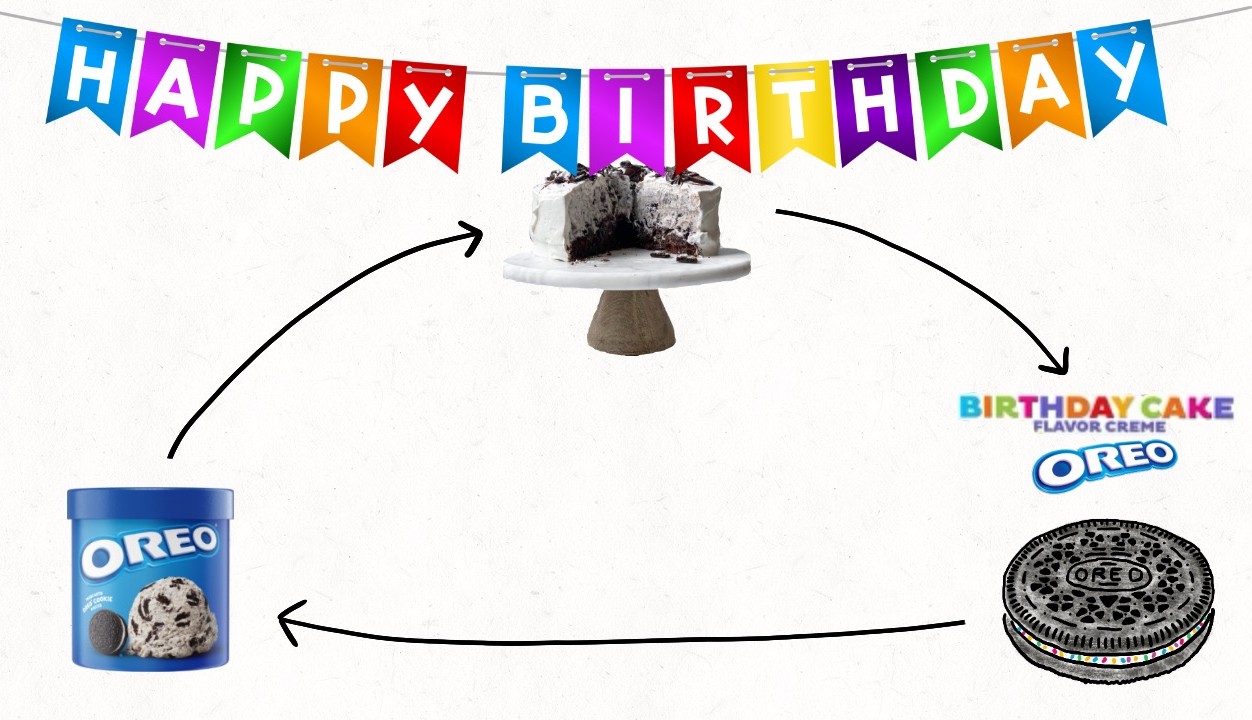}
  \caption{A tri-$\infty$ food.  Oreos go into ice cream, ice cream
  goes into cake, and cake---via a birthday---goes back into an Oreo.
  Every arrow corresponds to a product available at a
  supermarket.}\label{fig:tri-infinity}
\end{figure}

\begin{definition}[Tri-$\infty$ food]\label{def:tri-infinity}
A \textbf{tri-$\infty$ food} is a system of three foods $A$, $B$, $C$
forming a cycle: the products $A \ast B$, $B \ast C$, and $C \ast A$ all
exist as commercially available foods.
\end{definition}

\begin{example}[The Oreo--ice cream--cake cycle]\label{ex:tri-infinity}
The cycle constructed above,
\[
    \text{Oreo}
    \;\xrightarrow{\;\text{cookies \& cream}\;}\;
    \text{Ice cream}
    \;\xrightarrow{\;\text{ice cream cake}\;}\;
    \text{Cake}
    \;\xrightarrow{\;\text{\birthdaycake}\;}\;
    \text{Oreo},
\]
is a tri-$\infty$ food.  The iteration proceeds by traversing the cycle
repeatedly: at each pass, the food being incorporated is itself the
product of the previous traversal.  After $n$~full cycles, each food
contains $n$~nested layers of the other two.  In the limit, one obtains
three $\infty$-foods---an $\infty$-cookies-and-cream ice cream, an
$\infty$-ice cream cake, and an $\infty$-\birthdaycake{}---each
containing the other two.
\end{example}

The construction of this example reveals a pattern.  In every case we
have studied, the condition for producing an $\infty$-food is the same:
one needs a \emph{cycle}.  A mono-$\infty$ food is a self-loop.  A
bi-$\infty$ food is a cycle of length two.  A tri-$\infty$ food is a
cycle of length three.  The relevant structure is not the number of
foods, but the topology of the arrows connecting them.  This observation
leads to a more general theory.

\subsection{Homological foods}\label{sec:homological}

We now give this observation a formal home.

\begin{definition}[Food quiver]\label{def:food-quiver}
A \textbf{food quiver} is a directed graph whose
vertices are foods and whose arrows represent mixing: there is an arrow
$A \to B$ whenever the product $A \ast B$---the food obtained by
incorporating~$A$ into~$B$---exists as a commercially available
product.
\end{definition}

To see an example of what this graph looks like, it helps to start from a food that
participates in many arrows at once.

Consider a pint of Ben \& Jerry's Milk \& Cookies ice cream.  Among its
ingredients are cookie pieces, giving an arrow from Cookie to Ice cream.  But
ice cream can also go inside a cookie: an ice cream sandwich is precisely
this.  So we have a reverse arrow as well, and therefore a two-cycle
between Cookie and Ice cream.

The same pint also contains what appear to be Oreo pieces:
an arrow from Oreo to Ice cream, and an Oreo ice cream sandwich gives
the reverse.  The Oreo--ice cream--cake cycle of
Example~\ref{ex:tri-infinity} contributes two more arrows, and the Oreo
vertex carries its self-loop from \S\ref{sec:infinity-oreo}.

We are not done.  M\&M's go inside cookies, and cookies go inside M\&M's.  This is
the bi-$\infty$ system of \S\ref{sec:bi-infinity}, and it attaches
two more arrows to the Cookie vertex, forming another two-cycle.

\begin{figure}[ht]
  \centering
  \framedimg[width=0.88\textwidth]{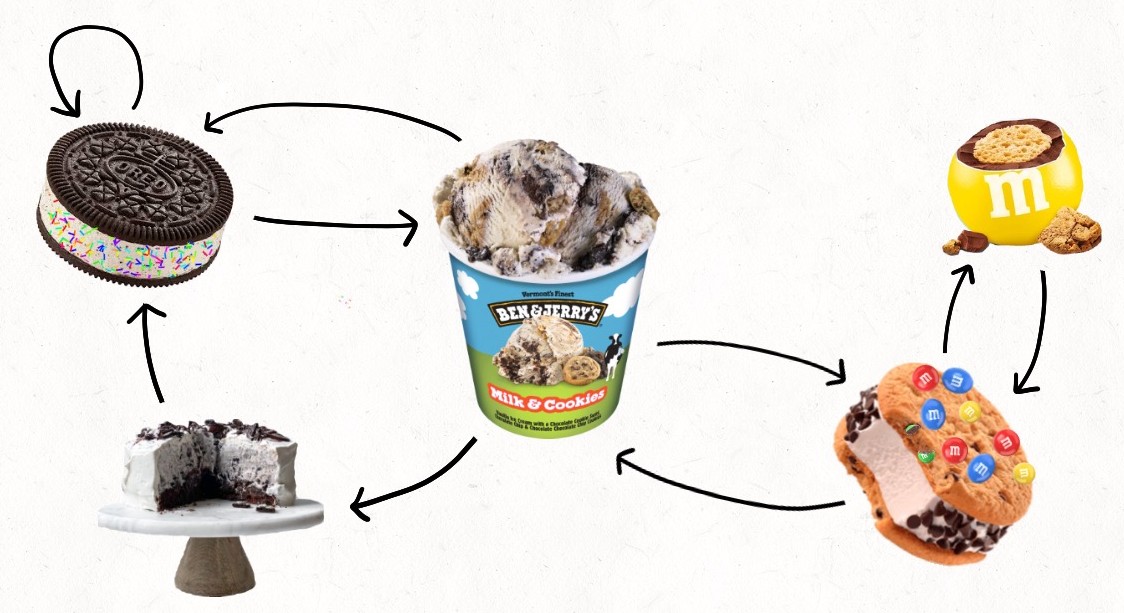}
  \caption{Example of a food quiver.  Five foods, connected by
commercially available products.  Ice cream sits
at the center, linking every cycle in the graph via Ben \& Jerry's Milk \& Cookies. Conjecture ~\ref{conj:quiver-topology} asks how compositions would change if we put M\&Ms inside that ice cream.}\label{fig:food-quiver}
\end{figure}

Five foods, and already the quiver contains multiple overlapping cycles.
The Ice cream vertex sits at the center: a single pint of Ben \& Jerry's
Milk \& Cookies witnesses arrows from both Cookie and Oreo, linking the
Oreo triangle on the left of Figure~\ref{fig:food-quiver} to the
M\&M--Cookie pair on the right.

\begin{definition}[Homological food]\label{def:homological-food}
A \textbf{homological food} is an $\infty$-food whose recursive
structure is determined by a food quiver.
\end{definition}

Every $\infty$-food constructed in this paper is a homological food.
The term is chosen by analogy with algebraic topology, where the cycles
in a space determine its homology.  In the food quiver, cycles determine
which $\infty$-foods exist; the question is whether the topology of the
quiver determines anything further.

\begin{conjecture}\label{conj:quiver-topology}
The topology of the food quiver determines the convergence properties and
limit compositions of its $\infty$-foods.  In particular, the presence
of additional cycles passing through a vertex alters the limit
compositions in a predictable way.
\end{conjecture}

For instance, the $\infty$-M\&M Cookie of
Theorem~\ref{thm:bi-infinity} was computed using only the two-cycle
between M\&M and Cookie.  But the quiver in
Figure~\ref{fig:food-quiver} shows that Cookie also connects to Ice
cream.  If M\&M's were added to the ice cream---creating a new
three-cycle through M\&M, Ice cream, and Cookie---the recursion at the
Cookie vertex would flow through a longer path, and the limit composition
of the $\infty$-M\&M Cookie could change.

We believe that the tools of algebraic topology and representation theory
may have something to say about these homological foods.  This is left as
an invitation.


\section{Discussion}\label{sec:discussion}

The $\infty$-Oreo is a fixed point of an affine recurrence whose
parameters are determined by the composition of commercially available
cookies.  To produce one, there is no need to iterate: one simply mixes
creme and wafer crumbs in the ratio prescribed by
Theorem~\ref{thm:infinity-oreo}, gives the resulting filling the volume
of a Mega Stuf, and presses it between two wafers.  Every ingredient is
available at a grocery store.  The mathematics has already performed the
infinite iteration; the baker only needs to read off the answer. This is a theory of commercial infinity.

But it is a theory, and theories rest on measurements.  The numerical
inputs to this paper ($s = 0.29$ for the original Oreo, $m_0 \approx
0.56$ for the Mega Stuf, $\ell \approx 0.96$ for the Oreo Loaded)
were drawn from magazine articles, a high school classroom experiment,
and a kitchen scale shared with a dog.  The qualitative conclusions are
robust: the recursion converges and the limit is independent of initial
conditions.  The third
decimal place of~$c^*$ is not to be trusted.  A more precise
determination would require either a careful independent measurement
campaign or, ideally, the manufacturing specifications from Mondelēz
International.

It is worth noting what the $\infty$-Oreo is \emph{not}.  It is not an
infinitely large cookie, nor a cookie containing infinitely many layers
of nested Oreo.  It is a cookie of ordinary size whose filling happens
to be mixed in the limiting proportion.  The infinity is in the
mathematics, not in the cookie.

\medskip

The most immediate open problem is empirical: measure~$\mu$
and~$\kappa$ to evaluate the bi-$\infty$ M\&M Cookie formulas of
Theorem~\ref{thm:bi-infinity}.  The equipment required is a kitchen
scale and a free afternoon.

On the theoretical side, Conjecture~\ref{conj:quiver-topology} asks
whether the topology of the food quiver affects limit compositions.  A
natural first case is to compare the $\infty$-M\&M Cookie computed from
the two-cycle alone with the limit obtained when additional cycles pass
through the Cookie vertex.  Whether extra cycles alter the fixed point,
accelerate convergence, or leave the limit unchanged is unknown.

More ambitiously: is there a spectral theory of food quivers?  Each
arrow carries a mixing fraction and each vertex a base composition, so
the quiver is a weighted directed graph with algebraic data on its
cells, reminiscent of a cellular
sheaf~\cite{hansen_ghrist_spectral}.  Whether the tools of spectral
sheaf theory have anything to say about recursive snacking is an open
and, we believe, entertaining question.

Finally, the entire framework suggests a design problem.  Given a target
composition for a filling, can a food company engineer the mixing
fractions of its products so that the associated recursion converges to
that target?  The fixed-point formula~\eqref{eq:fixed-point} is
invertible: given a desired~$c^*$, one can solve for the mixing
parameters that produce it.  In that sense, the $\infty$-Oreo is not just a
mathematical curiosity.  It is a recipe.


\section*{Acknowledgments}

This paper grew out of a conversation during tea time with Anne Somalwar, Ana Pavlakovi\'c, Josh Tabb, Matthew Stevens, and Collin Free, and was enhanced by further discussions with Victor G. Bennett, all of whom are welcome to
share in the credit and the blame.  The author is grateful to the
University of Pennsylvania Mathematics Department  \href{https://pizza-seminar.github.io/}{Pizza Seminar}  and its organizers, Riley Shahar and Chris Dunstan, for
providing the venue for the original talk, and to its audience for
taking the subject exactly as seriously as it deserved.

The author thanks James Lamers, Dan Anderson and his students at Queensbury High
School for the measurements that made the Mega Stuf composition
computable, and Lava and Marta for their unwavering interest in the data collection
process.


\bibliographystyle{abbrvnat}
\bibliography{references}

\end{document}